\documentclass[reqno,11pt]{amsart}
\usepackage{amsfonts}
\usepackage{a4wide}
\usepackage{color}
\usepackage{mathrsfs}
\usepackage{mathtools}
\usepackage{amsmath}
\usepackage{amssymb}
\usepackage{bbm}
\usepackage{esint}
\usepackage{nicefrac}
\numberwithin{equation}{section}

\usepackage[latin1]{inputenc}
\input xy
\xyoption{all}
\usepackage{enumerate}

\DeclareMathOperator{\BV}{BV}
\DeclareMathOperator{\BLV}{BLV}
\DeclareMathOperator{\Var}{V}
\DeclareMathOperator{\LVar}{LV}
\newcommand{\N}{\mathbb{N}}
\newcommand{\R}{\mathbb{R}}
\newcommand{\E}{\mathbb{E}}
\DeclareMathOperator{\T}{T}
\newcommand{\Laakso}{\mathfrak{L}}
\DeclareMathOperator{\dist}{d}
\DeclareMathOperator{\Dist}{dist}

\newcommand{\set}[2]{\{{#1}\mid{#2}\}}
\newcommand{\Set}[2]{\Big\{{#1}\,\Big|\;{#2}\Big\}}
\newcommand{\Sset}[2]{\Bigg\{{#1}\,\Big|\;{#2}\Bigg\}}

\newcommand{\Leqref}[1]{\stackrel{\scriptscriptstyle{\eqref{#1}}}{\leq}}

\newcommand{\eq}[1]{\begin{equation}{#1}\end{equation}}
\newcommand{\mlt}[1]{\begin{multline}{#1}\end{multline}}

\DeclareMathOperator{\LCJ}{LCJ}
\newcommand{\LCJs}{\mathcal{LCJ}}

\newcommand{\FF}{\mathcal{F}}
\newcommand{\AF}{\mathcal{AF}}

\newcommand{\var}{\Var}
\newcommand{\ELL}{\LCJ}
\newcommand{\ELLs}{\ELL_{\seq}}
\newcommand{\eps}{\varepsilon}
\DeclareMathOperator{\seq}{seq}

\DeclareMathOperator{\diam}{diam}
\DeclareMathOperator{\Metr}{\mathcal{X}}
\DeclareMathOperator{\Netr}{\mathcal{Y}}
\DeclareMathOperator{\LIP}{LIP}
\newcommand{\TT}{\mathcal{T}}
\newcommand{\Dyad}{\mathbb{D}}

\renewcommand{\leq}{\leqslant}
\renewcommand{\geq}{\geqslant}

\newcommand{\Leaves}{\mathcal{L}}

\newtheorem{theorem}{Theorem}[section]
\newtheorem{Cor}[theorem]{Corollary}
\newtheorem{Th}[theorem]{Theorem}
\newtheorem{Lm}[theorem]{Lemma}
\newtheorem{Prop}[theorem]{Proposition}
\newtheorem{Def}[theorem]{Definition}
\newtheorem{Example}[theorem]{Example}
\newtheorem{Rem}[theorem]{Remark}

\title{Catching jumps of metric-valued mappings with Lipschitz functions}
\author{Dmitriy Stolyarov}
\thanks{The work of D. Stolyarov is supported by the Ministry of Science and Higher Education of the Russian
Federation (agreement no. 075--15--2025--343)}
\address{St. Petersburg State University, Department of Mathematics and Computer Sciences, 7/9 Universitetskaya emb., St. Petersburg, 199034 Russia}
\email{d.m.stolyarov@spbu.ru}
\author{Alexander Tyulenev}
\address{Steklov Mathematical Institute of Russian academy of Sciences}
\email{tyulenev-math@yandex.ru,tyulenev@mi-ras.ru}
\begin{document}
\date{\today}
\allowdisplaybreaks
\keywords{Metric spaces, mappings of bounded variation}

\begin{abstract}
It follows from recent results of V. Bakhtin, R. Oleinik, and the second named author
that, given a metric space $\Metr$,
a continuous map $\gamma\colon [a,b] \to \Metr$ is a map of bounded variation if and only if $f \circ \gamma$ is a function of bounded variation for every Lipschitz function $f\colon\Metr \to \mathbb{R}$.
In this note, we show that the continuity assumption is of crucial importance: for many interesting examples of metric spaces there are no analogs of that characterization without the continuity assumption on $\gamma$. The interesting examples are:~$\ell_2$, infinite metric trees, and Laakso-type spaces. However, for ultrametric spaces the said characterization holds without any continuity assumptions.
\end{abstract}
\maketitle

\section{Introduction}
In the seminal paper \cite{Ambrosio1990} L. Ambrosio introduced the theory of mappings of bounded variation attaining values in metric spaces. The approach relied upon post-compositions with Lipschitz functions. Given a nonempty open set $\Omega \subset \mathbb{R}^{d}$ and a nonempty separable
locally compact metric space $\Metr=(\Metr,\rho)$, a Borel map $\gamma\colon \Omega \to \Metr$
is said to be a \textit{map of bounded variation}, provided there is a finite Borel measure $\mu_{\gamma}$ on~$\Omega$ such that for
each Lipschitz function $f\colon\Metr \to \R$ the composition $f \circ \gamma$ is a scalar-valued function of bounded variation and
\eq{\label{eqq.bounded_variation}
|D(f \circ \gamma)|(B) \le L_{f}\mu_{\gamma}(B) \quad \hbox{for each Borel set} \quad B \subset \Omega,
}
where~$D(f \circ \gamma)$ is the distributional gradient of~$f\circ \gamma$, $L_{f}$ is the global Lipschitz constant of~$f$ (defined in~\eqref{LipschitzConstantDefinition} below), and the absolute value of a signed measure $D(f \circ \gamma)$ denotes its total variation measure.

Ambrosio's approach is consistent with the classical one in the one-dimensional case $\Omega=(a,b)$, see \cite{Ambrosio1990} for details.
Since in this note we will deal with the mappings of intervals only, it
will be convenient to recall the classical definition of `one-dimensional mappings' of bounded variation going back to the 19-th century. 

Let $a$ and~$b$ be real numbers such that~$a < b$. Let $\mathfrak{T}$ denote the set of all finite ordered tuples $\T:=\{t_{i}\}_{i=0}^{N} \subset [a,b]$ such that
$a=t_{0} <  t_1 < t_2< \ldots < t_{N}=b$.
Given a metric space~$\Metr=(\Metr,\rho)$, we say that $\gamma\colon [a,b] \to \Metr$ is a \textit{map of bounded variation} and
write $\gamma \in \BV([a,b],\Metr)$, if
\eq{\label{eqq.BV}
\Var_{\gamma}:=\sup\limits_{\T  \in \mathfrak{T}}\sum\limits_{i=1}^{N}\rho(\gamma(t_{i-1}),\gamma(t_{i}))
}
is finite. Similarly, $\gamma\colon [a,b] \to \Metr$ is a \textit{map of bounded Lipschitz variation},  $\gamma \in \BLV([a,b],\Metr)$ provided
\eq{\label{eqq.BLV}
\LVar_{\gamma}:=\sup \Set{\Var_{f \circ \gamma}}{f\colon \Metr \to \R,\ L_f \leq 1}
}
is finite. Clearly,~$\LVar_{\gamma} \leq \Var_\gamma$. One may wonder whether a reasonable reverse estimation is possible. In the case~$\Metr = \R^d$, one has~$\Var_{\gamma} \leq d \LVar_{\gamma}$ by the choice of coordinate projections for~$f$ and a better estimate~$\Var_{\gamma} \lesssim \sqrt{d} \LVar_{\gamma}$ by the choice of all linear functions for~$f$; see Subsection~\ref{s24} below for the explanation. The notation~$A \lesssim B$ means there exists a constant~$C > 0$ such that~$A \leq CB$ and this constant is independent from a parameter, usually this independence is clear from the context. For example, in the estimate above, the bound is dimension-free.

\begin{Prop}[Simplification of Theorem~$1.2$ in~\cite{OT2025}]
\label{Prop.Oleinik_Tyulenev}
Let $\Metr=(\Metr,\rho)$ be a metric space. Then\textup,
\eq{\label{eqq.catches_variation}
\Var_\gamma = \LVar_\gamma \quad \hbox{for each} \quad \gamma \in C([a,b],\Metr).
}
\end{Prop}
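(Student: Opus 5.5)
The inequality $\LVar_\gamma\le\Var_\gamma$ is immediate: if $L_f\le 1$, then $|f(\gamma(t_i))-f(\gamma(t_{i-1}))|\le\rho(\gamma(t_{i-1}),\gamma(t_i))$ for every partition. So the plan is about the reverse inequality. Given $\varepsilon>0$, I would look for a single $1$-Lipschitz $f$ with $\Var_{f\circ\gamma}\ge \Var_\gamma-\varepsilon$, or $\Var_{f\circ\gamma}\ge M$ for arbitrary $M$ when $\Var_\gamma=\infty$.

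The first reduction is to a finite problem on points of $\Metr$. Suppose we have finitely many points $x_0,\dots,x_n$ and real values $g(x_k)$ with $|g(x_k)-g(x_j)|\le \rho(x_k,x_j)$ for all $k,j$. Then the McShane extension
\[
f(x)=\min_k\bigl(g(x_k)+\rho(x,x_k)\bigr)
\]
is a $1$-Lipschitz function on all of $\Metr$ agreeing with $g$ on these points. Hence it suffices to find times $s_0<\dots<s_n$, put $x_k=\gamma(s_k)$, and find $1$-Lipschitz data $g$ on $\{x_k\}$ with $\sum_k|g(x_{k+1})-g(x_k)|\ge \Var_\gamma-\varepsilon$. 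Without continuity this fails, and the paper's later examples presumably show exactly that. So continuity must be used to choose the $s_k$ well.

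Here is how I would use continuity.
\begin{enumerate}[(i)]
\item Fix $\delta>0$ and define stopping times $s_0=a$ and $s_{k+1}=\inf\{t>s_k:\rho(\gamma(t),\gamma(s_k))\ge\delta\}$. By uniform continuity there are finitely many of them, consecutive points satisfy $\rho(x_k,x_{k+1})=\delta$ (except possibly the last step), and $\gamma([s_k,s_{k+1}])\subset \overline{B}(x_k,\delta)$.
\item By a limiting argument in the spirit of "length equals the limit over fine partitions", show that $\delta\cdot n(\delta)$ approaches $\Var_\gamma$ along a suitable sequence $\delta\to0$, or at least that it captures $\Var_\gamma$ up to $\varepsilon$. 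Here $n(\delta)$ is the number of steps. Oscillations of size smaller than $\delta$ are lost at scale $\delta$, but they are recovered at smaller scales.
\item With this structure in place, try the data $g(x_k)=\delta\cdot(\text{a sign pattern})$, or more naturally $f=\Dist(\cdot,E)$ with $E=\{x_k: k \text{ even}\}$. Then $f(x_k)=0$ for even $k$, and one wants $f(x_k)\ge\delta$ for odd $k$.
\end{enumerate}

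The main obstacle is revisiting. An odd point $x_k$ may lie close to an earlier or later even point $x_j$, and then the naive sawtooth collapses. My first attempt would be to choose the data greedily in time order. Set $g(x_{k+1})=g(x_k)\pm\delta$, with the sign chosen to be admissible against all previously assigned points. When no admissible sign exists, use the triangle inequality and the fact that the curve is confined to $\delta$-balls between stopping times: the forced deficit at that step is then compensated by an earlier step, so only a bounded fraction of the steps is lost. Failing that, I would choose the sign pattern according to a maximal $\delta$-separated net along the curve, so that nearby visits receive consistent values.

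If the constant obtained this way is not $1$, I would refine the argument by iterating over scales $\delta_m\to0$. At each scale, only the increments not already captured at coarser scales would be added, with the construction localized to pieces of the curve of small diameter. This should bring the lost portion below $\varepsilon$ and yield the exact equality~\eqref{eqq.catches_variation}.
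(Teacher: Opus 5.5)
\textbf{There is a genuine gap: step (iii) and the final refinement are only a list of attempts, and the accounting they rely on breaks down when the curve revisits a region.}

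The inequality $\LVar_\gamma\le\Var_\gamma$, the McShane reduction and steps (i)--(ii) are fine. Step (ii) even has a short proof. Inserting one point of a fixed partition into the stopping-time partition raises the sum by at most $4\delta$, because each $\gamma([s_k,s_{k+1}])$ lies in the closed $\delta$-ball about $x_k$; hence $\delta n(\delta)\to\Var_\gamma$.

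The trouble is that step (iii) and the ``iterate over scales'' carry the whole content of the proposition. Your accounting assigns values by time index and tests them only at the stopping-time points of the same scale. Here is an example where this fails.

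Take $\R^2$, $1/\delta\in\N$ and $I=[0,1]\times\{0\}$. Let $\gamma$ run along $I$ from $(0,0)$ to $(1,0)$, go straight to $c=(1-\tfrac{\delta}{4},\tfrac{\sqrt{15}}{4}\delta)$, drop vertically to $(1-\tfrac{\delta}{4},0)$, and then traverse $I$ back and forth $K$ times. Then $\Var_\gamma=K+1+O(\delta)$. Your stopping-time points are $(k\delta,0)$ on the first pass, then $c$, and afterwards only points $((j+\tfrac12)\delta,0)$. Each of the latter lies at distance $\delta/2$ from the first-pass points $(j\delta,0)$ and $((j+1)\delta,0)$. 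So if consecutive first-pass values differ by $\delta$, as your sawtooth and greedy rules intend, the value at every later point is forced to be the average of these two. The consequences for each of your variants are as follows.
\begin{itemize}
\item With alternating signs (your sawtooth), all later values coincide, and the sum over your points is only $1+O(\delta)$.
\item Your greedy rule does not decide between alternating and monotone signs on the first pass, where every choice is admissible. Monotone signs, i.e.\ the data of $f(x,y)=x$, would lose only $O(\delta)$.
\item Any $1$-Lipschitz data with values in $\{0,\delta\}$, your net-based pattern included, are constant on the stopping-time points in $I$, since these are $\delta/2$ apart.
\item $\Dist(\cdot,E)$ captures at most about half of $\Var_\gamma$ on your points, since nearly every odd-indexed point lies within $\delta/2$ of an even-indexed one.
\end{itemize}
Nothing ``compensates'' these losses. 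Yet every $1$-Lipschitz extension of the alternating first-pass data coincides on $I$ with the triangle wave of slope $\pm1$. That function, like $\Dist(\cdot,E)$, catches $\Var_\gamma$ up to $O(\delta)$ once $\Var_{f\circ\gamma}$ is computed on a finer partition. What fails is the single-scale, time-indexed accounting, not these functions.

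What is needed is a $1$-Lipschitz $f$ defined by position rather than by time. Its slope along $\Gamma=\gamma([a,b])$ should be close to $1$ on most of $\Gamma$, counted with the multiplicity of $\gamma$. Its variation should then be computed at scales much finer than those used to build it. The insertion argument above, applied to $f\circ\gamma$, shows the stopping-time sums converge to $\Var_{f\circ\gamma}$ as $\delta\to0$. Revisits are then harmless, since each traversal of a strand is caught.

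To build such an $f$ in a general metric space, you need to know that at small scales $\Gamma$ looks like a union of well-separated, almost isometric arcs, up to a part of small length counted with multiplicity. Nothing in (i)--(iii) supplies this. Your refinement step also does not say how finer-scale corrections could be added without destroying the Lipschitz bound.

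This is exactly the input of the route the paper indicates (the proof itself is in \cite{OT2025}). If $\LVar_\gamma<\infty$, one shows $\mathcal{H}^1(\Gamma)<\infty$, so $\Gamma$ is rectifiable. Area-type formulas, which keep track of multiplicity, then reduce the comparison of $\Var_{f\circ\gamma}$ with $\Var_\gamma$ to the Jacobian of $f$ along $\Gamma$. The locally almost Euclidean structure of $\Gamma$ then yields a $1$-Lipschitz $f$ whose Jacobian is close to $1$ on most of $\Gamma$. Without a substitute for this step, your argument proves only $\LVar_\gamma\le\Var_\gamma$.
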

Independently and almost simultaneously a slightly weaker version of Proposition~\ref{Prop.Oleinik_Tyulenev} was proved by V.~Bakhtin~\cite{Bakhtin2025} by completely different methods.
More precisely, it was shown in \cite{Bakhtin2025} that $\gamma \in \BLV([a,b],\Metr) \cap C([a,b],\Metr)$ if and only if $\gamma \in \BV([a,b],\Metr) \cap C([a,b],\Metr)$.
The paper~\cite{OT2025} provided characterizations of~$p$-absolutely continuous~$\Metr$-valued curves,~$p \in [1,\infty)$, via Lipschitz post-compositions as well, and focused on geometric measure theoretical properties of such type mappings.

The aim of the present paper is to shed some light on the case where~$\gamma$ is discontinuous. We disregard any Sobolev or geometric measure theory context because the problem about~$\BV$ and~$\BLV$ discontinuous mappings is already rich enough.  As we will see, the dependence on the geometric properties of~$\Metr$ is much stronger.

Before we go into details of the discontinuous case, 
we briefly and informally recall the driving strategy of the proof of Proposition \ref{Prop.Oleinik_Tyulenev}.
Given a map $\gamma\colon [a,b] \to \Metr$, if $f \circ \gamma \in \BV([a,b],\mathbb{R})$ for every Lipschitz function $f\colon\Metr \to \mathbb{R}$, then
one can show that the $1$-Hausdorff
measure of the image $\Gamma:=\gamma([a,b])$ is finite. Then, according to the classical results (see~\cite{Ambrosio1990}) the set $\Gamma$ with the
induced metric appears to be a strongly rectifiable metric space.
Hence, one can use the full strength of the classical geometric measure theory machinery (for example, area-type formulas) and make some sort of reduction to `locally Euclidean settings'.
Furthermore, it is sufficient to care about infinitesimal behavior of Lipschitz functions only (i.e. about Jacobians of Lipschitz functions with respect to
$\Gamma$). As we will see, all this heavy machinery is not applicable to the discontinuous case.

\begin{Def}
\label{Def.catches_jumps}
Given $\Metr=(\Metr,\rho)$\textup,
we say that on~$\Metr$ Lipschitz functions catch jumps\textup, provided for any map~$\gamma \colon [a,b]\to \R$ with~$\LVar_\gamma < \infty$\textup, the total variation of~$\gamma$ is also finite.
The class of such metric spaces is denoted by~$\LCJs$.
\end{Def}

It occurs that many interesting metric spaces do not fall in the class $\LCJs$.
The simplest examples include infinite metric trees, infinite dimensional Banach spaces, and Laakso-type spaces. To develop an approach to the phenomenon, we consider a quantification of the~$\LCJs$-property:
\eq{\label{EllOfM}
\LCJ(\Metr) = \inf \Set{\frac{\LVar_\gamma}{\var_\gamma}}{\gamma\in \BV([a,b],\Metr)}.
}
Clearly~$\LCJ(\Metr) \in [0,1]$, and if~$\LCJ(\Metr) > 0$, then Lipschitz functions catch jumps on~$\Metr$, i.e. $\Metr \in \LCJs$.
It is easy to see that~$\LCJ(\R) = 1$,
and~$\LCJ(\R^d)\gtrsim d^{-\frac12}$, see Subsection~\ref{s24} for an explanation of the second inequality.

Whenever unspecified, we equip~$\R^d$ with the standard Euclidean norm. The first \emph{main result} of this note reads as follows.
\begin{Th}\label{Th1}
For each~$d > 1$\textup,
\eq{
\LCJ(\R^d) \lesssim d^{-\frac12}(\log d)^{\frac12}.
}
\end{Th}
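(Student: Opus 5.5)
We will build a discontinuous map $\gamma\colon[a,b]\to\R^d$ that jumps back and forth among many points, so that $\Var_\gamma$ is large while every $1$-Lipschitz $f$ has small variation along $\gamma$. Take $N$ points $x_1,\dots,x_N$ on the unit sphere of $\R^d$ that are almost orthogonal. The natural choice is random signed vectors $d^{-1/2}(\pm1,\dots,\pm1)$. With high probability, for $N=e^{cd}$ all pairwise distances lie in $[\sqrt2-\delta,\sqrt2+\delta]$, so the set is close to a simplex. Let $\gamma$ visit these points in some order $x_{\sigma(1)},x_{\sigma(2)},\dots,x_{\sigma(M)}$, one point per subinterval, with $M$ jumps. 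Each jump costs about $\sqrt2$, so $\Var_\gamma\approx\sqrt2\,M$. For a $1$-Lipschitz $f$, only the values $v_i=f(x_i)$ matter, and they form an arbitrary $1$-Lipschitz function on the finite set $\{x_i\}$. Conversely, by McShane extension, every such assignment extends to all of $\R^d$. So the problem becomes a combinatorial one: choose a visiting sequence such that $\sum_k |v_{\sigma(k+1)}-v_{\sigma(k)}|$ is small for every $1$-Lipschitz $v$ on the point set.

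A pure simplex is bad, because $v$ equal to $0$ on half the points and $\sqrt2$ on the other half can make every jump cost fully. The construction therefore needs geometry that penalizes such splits. The plan is to use the points of the Boolean cube $\{0,1\}^d/\sqrt d$ itself, or a Hamming-type subset, with distances $\sqrt{h/d}$ where $h$ is the Hamming distance. A Lipschitz $f$ then behaves like a $\sqrt{d}$-rescaled function on the cube with respect to the $\ell_2$-type metric $\sqrt{h}$. Let $\gamma$ jump along a long random sequence of independent uniform points of the cube; typical jump length is about $1/\sqrt2$, so $\Var_\gamma\approx M/\sqrt2$. We then bound $\sum_k|v(y_{k+1})-v(y_k)|$ uniformly over Lipschitz $v$. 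For a fixed $v$, its expectation is about $M\,\E|v(Y)-v(Y')|$. Concentration of measure on the cube gives $\E|v(Y)-v(Y')|\lesssim d^{-1/2}$ for $v$ that are $1$-Lipschitz in the Euclidean metric; this is Gaussian-type concentration with $\|\nabla v\|\le d^{-1/2}$ per coordinate in Hamming steps. This already yields the $d^{-1/2}$ factor.

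The main obstacle is uniformity over all $v$, since $\LVar$ is a supremum. The plan is to discretize: round $v$ to a grid of mesh $d^{-1/2}$. The number of relevant rounded functions is at most roughly $e^{C\cdot 2^d}$, which is too many for a plain union bound against deviations of order $M d^{-1/2}$ unless $M\gtrsim 2^d$. That is acceptable, since $M$ is at our disposal. Taking $M\approx 2^d\cdot\mathrm{poly}(d)$, Hoeffding applied to the sum gives deviations of order $\sqrt{M\log(\#\text{functions})}$ times the increment size. To avoid the sum being dominated by worst-case jumps, one needs a bound on increments, $|v(y)-v(y')|\le\sqrt{h/d}$, with $h$ concentrated near $d/2$. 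The union bound then costs a $\sqrt{\log}$ factor, and balancing the parameters is where the $(\log d)^{1/2}$ should enter. If a direct count fails, a chaining or entropy argument for the class of Lipschitz functions on the cube would replace it. Alternatively, one can pass to a finite subset of size $d^{O(1)}$ via a random sequence of short walks and use the fact that Lipschitz functions on a set of size $n$ with all distances near $1$ can differ in more than $\sqrt{\log n/d}$-fraction only on few pairs. This yields $\sup_v \sum_k|\Delta v|\lesssim M\sqrt{\log d/d}$, and hence $\LCJ(\R^d)\lesssim d^{-1/2}(\log d)^{1/2}$.
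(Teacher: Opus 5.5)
\textbf{There is a genuine gap.} It sits at the step you treat as routine: the claim that concentration of measure on the cube $Q=\{0,1\}^d/\sqrt d$ gives $\E|v(Y)-v(Y')|\lesssim d^{-1/2}$ for every $v$ that is $1$-Lipschitz in the Euclidean metric. This claim is false.

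On $Q$ the Euclidean distance is $\rho=\sqrt{h/d}$, the square root of the normalized Hamming distance. The bound $|v(y)-v(y')|\le d^{-1/2}$ for neighbours $y,y'$ is a per-coordinate bound, not a Gaussian-type gradient bound. McDiarmid's inequality with $c_i=d^{-1/2}$ only gives fluctuations of order $(\sum_i c_i^2)^{1/2}=1$. Hamming concentration controls $h(\cdot,A)$ at scale $\sqrt d$, which in the metric $\rho$ is the scale $d^{-1/4}$.

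This scale is attained. Let $A\subset Q$ be the set of points with at most $d/2$ nonzero coordinates, and put $v(y)=\sqrt{h(y,A)/d}$.
\begin{itemize}
\item Since $|\sqrt a-\sqrt b|\le\sqrt{|a-b|}$, $v$ is $1$-Lipschitz on $(Q,\rho)$ and extends to $\R^d$ by McShane.
\item $v$ vanishes on $A$, which has measure at least $1/2$.
\item $v\ge d^{-1/4}/\sqrt2$ on the points with at least $d/2+\sqrt d/2$ nonzero coordinates, a set of measure bounded below by an absolute constant.
\end{itemize}
Hence $\E|v(Y)-v(Y')|\gtrsim d^{-1/4}$, while $\E\rho(Y,Y')\approx 2^{-1/2}$. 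So your i.i.d.\ sequence of cube points has $\LVar_\gamma/\Var_\gamma\gtrsim d^{-1/4}$ however the union bound is organized, and the construction cannot give better than $\LCJ(\R^d)\lesssim d^{-1/4}$.

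The fallback in your last sentences also fails, by your own simplex observation. If all pairwise distances lie in $[1-\delta,1+\delta]$, every function with values in $[0,1-\delta]$ is $1$-Lipschitz. So for any family of pairs, some $1$-Lipschitz function differs by $1-\delta$ on at least half of them.

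\textbf{What is missing} is a measure under which all $1$-Lipschitz functions on $\R^d$ concentrate at scale $d^{-1/2}$ simultaneously. The normalized uniform measure $\lambda_{d-1}$ on $S^{d-1}$ (by L\'evy's inequality) does this, as does the Gaussian measure scaled by $d^{-1/2}$; the cube does not. The paper takes $\mu$ to be the image of $\lambda_{d-1}$ under $x\mapsto(x,-x)$. Then $\int\rho\,d\mu=2$, while $\int|f(x)-f(-x)|\,d\lambda_{d-1}(x)\le 12e^{-d\eps^2/8}+2\eps$ for every $1$-Lipschitz $f$. The choice $\eps=4d^{-1/2}\sqrt{\log d}$ finishes the proof. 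That choice is the only source of the logarithm: independent pairs would work as well, and integrating the tail of L\'evy's bound would even remove the logarithm.

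In this framework your ``main obstacle'' disappears. The concentration bound is a deterministic statement about $\lambda_{d-1}$, valid for all $f$ at once. Corollary~\ref{MeasureRemark} together with Lemma~\ref{FromCurvesToSequences} then turns the measure into an actual map $\gamma$, with an approximation error uniform in $f$. No discretization of the function class, random sampling, or union bound is needed.
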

\begin{Cor}\label{Cor1}
For any infinite dimensional Banach space~$\Metr$\textup,~$\LCJ(\Metr) = 0$.
\end{Cor}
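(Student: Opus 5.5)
Corollary~\ref{Cor1} follows from Theorem~\ref{Th1} combined with Dvoretzky's theorem. The idea is that $\LCJ$ behaves monotonically, up to distortion, under bi-Lipschitz embeddings of subsets. The plan has three steps.

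\textbf{Step 1 (monotonicity).} Let $\Netr\subset\Metr$ with the induced metric, and let $\gamma\colon[a,b]\to\Netr$. Then $\Var_\gamma$ is the same whether $\gamma$ is viewed in $\Netr$ or in $\Metr$. Moreover, every $1$-Lipschitz $f\colon\Netr\to\R$ extends to a $1$-Lipschitz function on $\Metr$ by the McShane extension $\tilde f(x)=\inf_{y\in\Netr}(f(y)+\rho(x,y))$. Hence $\LVar_\gamma$ is also unchanged, and therefore $\LCJ(\Metr)\leq \LCJ(\Netr)$.

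Next, suppose $\Netr'$ is another metric space and $\Phi\colon\Netr'\to\Netr$ is a bijection with $\rho(\Phi x,\Phi y)\le \rho'(x,y)\le D\rho(\Phi x,\Phi y)$. For a curve $\gamma$ in $\Netr'$ we have $\Var_{\Phi\circ\gamma}\ge D^{-1}\Var_\gamma$. In the other direction, any $1$-Lipschitz $f$ on $\Netr$ gives a $1$-Lipschitz $f\circ\Phi$ on $\Netr'$, so $\LVar_{\Phi\circ\gamma}\le \LVar_\gamma$. Together these give $\LCJ(\Netr)\le D\,\LCJ(\Netr')$. Combining the two parts, $\LCJ(\Metr)\le D\,\LCJ(\Netr')$ whenever $\Netr'$ embeds into $\Metr$ with distortion $D$.

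\textbf{Step 2 (Dvoretzky).} Since $\Metr$ is infinite dimensional, for every $d$ it contains a $d$-dimensional subspace $E_d$ with Banach--Mazur distance at most $2$ to $\ell_2^d$. This is Dvoretzky's theorem; any fixed constant in place of $2$ would do. Step 1 then gives $\LCJ(\Metr)\le 2\LCJ(\R^d)$.

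\textbf{Step 3 (conclusion).} Theorem~\ref{Th1} gives $2\LCJ(\R^d)\lesssim d^{-1/2}(\log d)^{1/2}$. Letting $d\to\infty$ yields $\LCJ(\Metr)=0$.

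There is no genuine obstacle here. The only point to check is that the extension in Step 1 preserves the Lipschitz constant exactly, and the McShane formula does this.
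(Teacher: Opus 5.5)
Your argument is correct. It rests on the same two ingredients as the paper, Dvoretzky's theorem and Theorem~\ref{Th1}, but the last step is organized differently.

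You isolate a transfer principle: $\LCJ(\Metr)\le D\,\LCJ(\Netr')$ whenever $\Netr'$ embeds into $\Metr$ with distortion $D$. You then conclude $\LCJ(\Metr)\le 2\LCJ(\R^d)\to 0$ directly.

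The paper instead takes Dvoretzky subspaces $X_n$ that are $2$-isomorphic to $\R^{2^n}$. In them it picks curves $\gamma_n\colon[0,1]\to X_n$ of unit variation with $\LVar_{\gamma_n}\lesssim\sqrt{n}\,2^{-n/2}$; the passage from $\R^{2^n}$ to $X_n$ is exactly your Step~1, left implicit there. It then concatenates these curves by translation, obtaining curves with $\Var=n$ and $\LVar\lesssim\sum_m\sqrt{m}\,2^{-m/2}\lesssim 1$ uniformly in $n$.

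This proves more than $\LCJ(\Metr)=0$. It shows that the variation can be unbounded while the Lipschitz variation stays bounded, which is a quantitative form of $\Metr\notin\LCJs$. That is also why the paper needs the dimensions $2^n$ (for summability) and the linear structure (translations, and rescaling via Remark~\ref{UnitLength}). For the equality as stated, your direct route suffices and is cleaner.

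One small point: the McShane extension is not needed. For $\LCJ(\Metr)\le\LCJ(\Netr)$ you only use that $\LVar_\gamma$ computed with $1$-Lipschitz functions on $\Metr$ is at most $\LVar_\gamma$ computed with $1$-Lipschitz functions on $\Netr$. This follows simply by restricting functions from $\Metr$ to $\Netr$.
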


The corollary above might have led to a suggestion that the doubling property of the space might be related to the~$\LCJs$-property.
Unfortunately, this is not the case. Our \textit{second main result} below shows that there are non doubling metric spaces where Lipschitz functions catch jumps.
We refer the reader to Chapter~$15$ of~\cite{DavidSemmes1997} for information about uniformly disconnected metric spaces and their relationship
with ultrametric spaces; we provide a digest in Section~\ref{S2}.
\begin{Th}\label{UltrametricTheorem}
Let~$\Metr$ be a uniformly disconnected metric space. Then\textup,~$\LCJ(\Metr) > 0$.
\end{Th}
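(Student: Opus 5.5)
Since $\Metr$ is uniformly disconnected, it is bi-Lipschitz equivalent to an ultrametric space (Chapter~15 of~\cite{DavidSemmes1997}, digested in Section~\ref{S2}). The ratio $\LVar_\gamma/\Var_\gamma$ changes by at most a bounded factor under a bi-Lipschitz change of metric, because both $\Var$ and $\LVar$ are comparable up to the distortion constants. So it suffices to prove $\LCJ(\Metr)\geq c>0$ for an ultrametric $\rho$, with an absolute constant $c$.

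Fix $\gamma\colon[a,b]\to\Metr$ and a partition $\T=\{t_i\}_{i=0}^N$. I want to find one $1$-Lipschitz $f$ with $\sum_i|f(\gamma(t_i))-f(\gamma(t_{i-1}))|\gtrsim\sum_i\rho(\gamma(t_{i-1}),\gamma(t_i))$. The plan is to organize the points $x_i=\gamma(t_i)$ by the hierarchical ball structure of the ultrametric. For each scale $2^{-k}$, the closed balls of radius $2^{-k}$ partition $\Metr$. A jump $\rho(x_{i-1},x_i)\in(2^{-k-1},2^{-k}]$ means the two points lie in the same ball of radius $2^{-k}$ but in distinct balls of radius $2^{-k-1}$. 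I group jumps by dyadic scale and look for an $f$ of the form
\[
f(x)=\sum_k 2^{-k-2}\,\varepsilon_k\bigl(B_{k+1}(x)\bigr),
\]
where $B_{k+1}(x)$ is the ball of radius $2^{-k-1}$ containing $x$ and $\varepsilon_k$ assigns a value in $\{0,1\}$ to each such ball. Any function of this form is $1$-Lipschitz up to a constant. Indeed, if $\rho(x,y)\in(2^{-m-1},2^{-m}]$, then all terms with $k+1\leq m$ coincide for $x$ and $y$. The remaining terms sum to at most $\sum_{k\geq m}2^{-k-2}\lesssim\rho(x,y)$. This is exactly where the ultrametric inequality is used.

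Next I choose the labels $\varepsilon_k$ randomly: independent fair coins for each ball at each scale. Take a jump at scale $m$. The terms with $k<m$ cancel. The term $k=m$ contributes $\pm 2^{-m-2}$ with probability $1/2$, since the two points lie in different balls of radius $2^{-m-1}$ that receive independent labels. The tail $k>m$ could cancel it, so I bound the difference from below using the expectation of an absolute value. Condition on all labels except the scale-$m$ labels of the two relevant balls. The difference is then $2^{-m-2}(\varepsilon-\varepsilon')+R$ with $\varepsilon,\varepsilon'$ independent coins, so
\[
\E\bigl|2^{-m-2}(\varepsilon-\varepsilon')+R\bigr|\geq \tfrac14\bigl(|R+2^{-m-2}|+|R-2^{-m-2}|\bigr)\geq 2^{-m-3}.
\]
This gives $\E|f(x_i)-f(x_{i-1})|\gtrsim\rho(x_{i-1},x_i)$ for every $i$. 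Summing over $i$, some realization of $f$ satisfies $\sum_i|f(x_i)-f(x_{i-1})|\gtrsim\sum_i\rho(x_{i-1},x_i)$. Taking the supremum over $\T$ yields $\LVar_\gamma\gtrsim\Var_\gamma$. The same argument shows $\Var_\gamma=\infty$ forces $\LVar_\gamma=\infty$, so the quantity in~\eqref{EllOfM} is bounded below.

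The main obstacle is making the construction legitimate in a general ultrametric space. Distances need not be dyadic, which is handled by rounding through bi-Lipschitz equivalence or by using dyadic annuli as above. The sum defining $f$ is infinite and there may be infinitely many balls, but only finitely many points $x_i$ matter, so I can take $f$ defined on these finitely many points and extend it by McShane with the same Lipschitz constant. The randomization then involves only finitely many coins.
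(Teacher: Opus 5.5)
Your proof is correct and takes essentially the paper's route: reduce to an ultrametric via Proposition~\ref{Prop.ultrametric}, then build a random function from independent coin labels attached to the balls of the hierarchical partitions at geometric scales. That function is surely Lipschitz and satisfies $\E|f(x)-f(y)|\gtrsim\rho(x,y)$ for every pair. The only difference is technical: the paper takes a small ratio $q$ between consecutive scales and bounds the tail by the triangle inequality, whereas your conditioning estimate $\E|a(\varepsilon-\varepsilon')+R|\geq a/2$ works directly with ratio $1/2$. To make the sum finite on your finite set of points, you should also discard the coarse scales at which all the points lie in a single ball, since those terms cancel in every difference.
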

As we will see from the proof of Theorem~\ref{UltrametricTheorem}, this bound is uniform with respect to the parameters in the definition of uniform disconnectivity in Subsection~\ref{s21}. The proof of Theorem~\ref{UltrametricTheorem} is relatively simple once we accept the proper idea to use random Lipschitz functions.

On the other hand, there are doubling metric spaces where Lipschitz functions do not catch jumps.
The Laakso-type spaces were introduced in \cite{Laakso2000}. They were the
first examples of Ahlfors $Q$-regular metric spaces,~$Q \geq 1$ being arbitrary, satisfying Poincare-type inequalities. Those spaces
are doubling, but do not admit a bi-Lipschitz embedding into $\R^{d}$, $d \in \N$. We present our \textit{third main result}.
\begin{Th}
\label{LaaksoTheorem}
There exists a Laakso space~$\Laakso$ that is not in the~$\LCJs$-class.
\end{Th}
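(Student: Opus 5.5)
The plan is to build $\gamma$ as a concatenation of countably many piecewise constant pieces living at smaller and smaller scales, and to reduce the estimate of $\LVar$ for each piece to a transport problem. Let $\gamma$ take the values $x_0,x_1,\dots,x_N$ successively on consecutive subintervals. Then for every $f$ with $L_f\leq 1$ we have $\Var_{f\circ\gamma}=\sum_i |f(x_{i+1})-f(x_i)|$. Fixing the signs $s_i\in\{\pm1\}$ of the increments, this equals $\int f\,d\mu_s$ with $\mu_s=\sum_i s_i(\delta_{x_{i+1}}-\delta_{x_i})$, a signed measure of zero total mass. By Kantorovich--Rubinstein duality, $\LVar_\gamma=\max_s W_1(\mu_s^+,\mu_s^-)$. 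So it suffices to construct, for every $\eps>0$, a finite sequence $x_0,\dots,x_N$ in $\Laakso$ with $\sum_i\rho(x_i,x_{i+1})=1$ (after rescaling) and $W_1(\mu_s^+,\mu_s^-)\leq\eps$ for all sign patterns $s$.

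Given such \emph{$\eps$-blocks}, the gluing step is routine. Choose the Laakso space self-similar, so that a block can be placed inside a small copy of $\Laakso$ at scale $r_k$, with $r_k\to0$ and all copies accumulating at one point $x_\ast$. Put the $k$-th block, with $\eps=\eps_k$, on the interval $[a+2^{-k},a+2^{-k+1})$, and set $\gamma(a)=x_\ast$. Use $m_k$ repetitions of the block at scale $r_k$, chosen so that $\sum_k m_k r_k=\infty$ while $\sum_k m_k r_k\eps_k<\infty$, and so that the connecting jumps to and from $x_\ast$ contribute a convergent series. Then $\Var_\gamma=\infty$. On the other hand, since $\Var_{f\circ\gamma}$ is additive over consecutive intervals up to the connecting jumps, $\LVar_\gamma\leq\sum_k m_k r_k\eps_k+\sum(\text{connecting jumps})<\infty$. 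This shows $\Laakso\notin\LCJs$.

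The main obstacle is the construction of the $\eps$-blocks, and it must genuinely use the Laakso geometry. I expect to have to rule out two naive choices. A doubling space contains no large equidistant families, so "star-like" configurations are unavailable. Purely random configurations fail as well: the union bound over the $2^N$ sign patterns kills any concentration argument, and the positive lower bound $\LCJ(\R^d)\gtrsim d^{-1/2}$ shows that Euclidean-type geometry alone cannot work. My first attempt would be the following. Take a Laakso graph of level $n$ and a pair of points $p,q$ joined by $2^n$ distinct geodesics that branch at all scales. Let the sequence walk back and forth between the $2^n$ "midpoints" of these geodesics in the order of a Gray code, or Walsh-type order, on the branching tree. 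Each consecutive jump then has length comparable to the scale of the first differing branch. I would then prove that, for every sign pattern, the positive and negative masses can be matched within small subtrees. Because of the identifications in the Laakso construction, two branches at level $j$ are at distance $\sim 2^{-j}$, while the matching cost at level $j$ telescopes. The hope is a total cost of order $1/n$ against total jump length of order $1$, which gives $\eps\sim 1/n$. Verifying this matching bound uniformly over all sign patterns is the step I expect to be hardest.
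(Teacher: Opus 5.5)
Your reduction to finite blocks via Kantorovich--Rubinstein is sound; it is equivalent to Lemma~\ref{FromCurvesToSequences} and Corollary~\ref{MeasureRemark}. The gluing step is also the right kind of argument, and the paper leaves it implicit. One fix is needed there: make each block a closed walk. Otherwise the return jumps between your $m_k$ repetitions can each cost up to the block's scale, which $\eps_k$ does not control.

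\textbf{The gap.} The whole substance of the theorem is the existence of $\eps$-blocks with $\eps\to0$, and you leave this as a hope. Worse, the configuration you propose cannot produce them. Suppose, as you describe, the visited points $y_\sigma$, $\sigma\in\{0,1\}^n$, satisfy $\rho(y_\sigma,y_\tau)\asymp r_{j(\sigma,\tau)}$ with constants independent of $n$. Here $j(\sigma,\tau)$ is the first level at which the branches differ and $r_j$ is the geometric scale of that level. Nested branch vertices of rhombi behave exactly like this. Then $\rho'(y_\sigma,y_\tau)=r_{j(\sigma,\tau)}$ is an ultrametric on $Y=\{y_\sigma\}$ that is uniformly equivalent to $\rho$. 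The random function $\Phi$ of Section~\ref{S5} then satisfies $\E|\Phi(x)-\Phi(y)|\gtrsim\rho(x,y)$ on $Y$, with Lipschitz constant independent of $n$. Extend each realization to $\Laakso$ by McShane. You get $\LVar_\gamma\gtrsim\Var_\gamma$ for every walk in $Y$, in any order, Gray code or not. This is precisely the phenomenon \eqref{LeavesInequality}: a set of ``leaves'' of a branching structure is too disconnected. No matching argument over sign patterns can give $\eps\to0$ on it.

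\textbf{The missing idea.} You need a way to control all $1$-Lipschitz $f$, hence all sign patterns, simultaneously. The paper does no transport at all; it uses orthogonality of martingale differences.
\begin{enumerate}
\item Work in the finite graphs $X_N\subset\Laakso$ and take all left-to-right geodesics from $0$ to $1$ in $G_N$.
\item For $f\in\LIP_1(X_N)$, form along each geodesic the $4$-adic martingale of difference quotients of $f$, as in Example~\ref{OneDimensionalExample}. It is bounded by $1$.
\item Choose the test measure $\mu$ to charge, for every level $k<N$ and every edge of $G_k$, pairs $(P,Q)$ of the two middle vertices of the rhombus replacing that edge in $G_{k+1}$. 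Weight them so that each level contributes $1/2$ to $\int\rho\,d\mu$, so $\int\rho\,d\mu\asymp N$.
\item An elementary triangle-inequality computation bounds the weighted $|f(P)-f(Q)|$ by the $L_1$ mass of $dM_{k+1}$ on the corresponding pair of atoms.
\item Hence $\int|f(x)-f(y)|\,d\mu\lesssim\sum_k\|dM_k\|_{L_1}\leq\sqrt N$ by Lemma~\ref{MartingaleOrthogonality}. Proposition~\ref{TechnicalMartingaleProposition} then gives $\LCJ(X_N)\lesssim N^{-1/2}$.
\end{enumerate}
The test pairs sit at all scales along the same geodesics. The values of $f$ along those geodesics therefore tie the jumps at different scales together. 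A leaf-type configuration like yours discards exactly this structure.
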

Finally, we consider the simplest metric trees. This example is quite interesting because it allows one to demonstrate both phenomena
described above.
\begin{Th}\label{TreeTheorem}
Let~$\TT$ be the dyadic tree of depth~$N$ equipped with the standard graph metric. Then\textup,
\eq{\label{WholeTreeInequality}
\LCJ(\TT) \lesssim (\log N)^{-\frac12}.
}
However\textup, if~$\Leaves$ denotes the set of its leaves with the metric induced from~$\TT$, then,
\eq{\label{LeavesInequality}
\LCJ(\Leaves) \gtrsim 1.
}
\end{Th}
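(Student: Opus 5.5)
The two inequalities in Theorem~\ref{TreeTheorem} are proved separately: \eqref{LeavesInequality} by random Lipschitz functions adapted to the ultrametric structure of~$\Leaves$, and \eqref{WholeTreeInequality} by an explicit piecewise constant $\gamma$ that jumps between leaves.

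\emph{Lower bound for~$\Leaves$.} For two leaves $x\ne y$ whose closest common ancestor lies at level~$k$, we have $\rho(x,y)=2(N-k)$. Thus $\Leaves$ is ultrametric, hence uniformly disconnected, and \eqref{LeavesInequality} should follow from Theorem~\ref{UltrametricTheorem}. The catch is that the bound there must be uniform in~$N$, which is delicate because the distances in $\Leaves$ grow linearly, not geometrically, in the depth. I would therefore argue directly.
\begin{itemize}
\item For each $j\ge0$ with $4^j\le N$, let $a_j(x)$ be the ancestor of~$x$ at level $N-4^j$. Attach independent Rademacher signs $\varepsilon_v$ to the vertices, and set
\[
f(x)=\sum_j 4^j\varepsilon_{a_j(x)}.
\]
\item \emph{Lipschitz constant.} If $\rho(x,y)=2m$, then $a_j(x)=a_j(y)$ whenever $4^j<m$. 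Hence $|f(x)-f(y)|\le 2\sum_{4^j\le m}4^j\lesssim m$, so $L_f\lesssim 1$ deterministically.
\item \emph{Size of a jump.} Let $J$ be the largest index with $4^J\le m$. The top term $4^J(\varepsilon_{a_J(x)}-\varepsilon_{a_J(y)})$ has modulus $2\cdot4^J$ with probability $1/2$. The lower terms add up to at most $2\sum_{j<J}4^j<\tfrac23 4^J$, so they cannot cancel it. Hence $\E|f(x)-f(y)|\gtrsim\rho(x,y)$.
\item \emph{Conclusion.} Fix a partition $\T$. Summing the previous bound over the consecutive pairs of $\T$ and taking expectations gives a realization of the signs with $\Var_{f\circ\gamma}\gtrsim\sum_i\rho(\gamma(t_{i-1}),\gamma(t_i))$. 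Taking the supremum over $\T$ yields $\LVar_\gamma\gtrsim\Var_\gamma$.
\end{itemize}

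\emph{Upper bound for~$\TT$.} I take $\gamma$ piecewise constant with values in $\Leaves$. The sequence of values is an Eulerian circuit in a weighted complete digraph on the leaves, in which each ordered pair $(x,y)$ with common ancestor at level~$k$ appears $w_k$ times.
\begin{itemize}
\item \emph{Reduction.} For such $\gamma$, up to the position of the breakpoints,
\[
\Var_\gamma=\sum_{x,y}w_{k(x,y)}\rho(x,y),
\qquad
\Var_{f\circ\gamma}=\sum_{x,y}w_{k(x,y)}|f(x)-f(y)|.
\]
So it suffices to bound the second sum, uniformly over $1$-Lipschitz $f$ on $\TT$, by $(\log N)^{-1/2}$ times the first.
\item \emph{Choice of weights.} Choose $w_k$ so that every distance scale $m=N-k$ contributes equally to $\Var_\gamma$. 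With $4^{N-k}$ pairs at scale~$m$, this suggests $w_k\approx 4^{-(N-k)}/(N-k)$, giving $\Var_\gamma\approx 2^{2N}\log N$ after normalization.
\item \emph{Bounding a $1$-Lipschitz $f$.} Consider the martingale $M_k=\E[f(X)\mid \mathcal F_k]$ for a uniform random leaf~$X$, where $\mathcal F_k$ records the ancestor at level~$k$. The inner sum over pairs splitting at level~$k$ is controlled by $\E|M_{k+1}-M_k|$, weighted by the subtree sizes.
\item \emph{Two constraints on the increments.} Since $f$ changes by at most $1$ per edge, $|M_{k+1}-M_k|\lesssim N-k$. The telescoping identity $\sum_k\E(M_{k+1}-M_k)^2=\operatorname{Var}f(X)$ provides the second constraint.
\item \emph{Cauchy--Schwarz.} Applying Cauchy--Schwarz over the $\log N$ dyadic scale blocks should then give $\Var_{f\circ\gamma}\lesssim 2^{2N}(\log N)^{1/2}$, and hence $\LCJ(\TT)\lesssim(\log N)^{-1/2}$.
\end{itemize}

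\emph{Main obstacle.} The hard step is the uniform-in-$f$ estimate for the tree. A $1$-Lipschitz function need not have mean-zero increments; for instance, the distance to the root has drift. So the martingale increments are not bounded by~$1$, and I expect the choice of weights $w_k$, balancing the two constraints above, to be where the $\sqrt{\log N}$ and not $\sqrt N$ arises. My first attempt would be to prove the blockwise inequality that the weighted contribution of scales $m\in[2^s,2^{s+1})$ is $\lesssim 2^{2N}\bigl(\E(M_{N-2^{s}}-M_{N-2^{s+1}})^2\bigr)^{1/2}/2^{s}$. I would then sum over~$s$, using $\operatorname{Var}f(X)\lesssim N^2$ together with the per-block bound.
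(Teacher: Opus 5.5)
\paragraph{The bound for the leaves is fine.} Your proof of \eqref{LeavesInequality} is correct and is in substance the paper's proof of Theorem~\ref{UltrametricTheorem} with $q=1/4$. The uniformity worry is unfounded: the graph metric on $\Leaves$ is itself an ultrametric, and that construction works at geometric scales with constants depending only on $q$. The indexing needs a fix, though. With $a_j(x)$ at level $N-4^j$, one has $a_j(x)=a_j(y)$ exactly when $4^j\geq m$; you wrote the reverse. Moreover, sibling leaves ($m=1$) then get $f(x)=f(y)$ for every choice of signs, so the jump estimate fails for them. Place $a_j(x)$ at level $N-4^j+1$, so that $a_0(x)=x$. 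Then the two ancestors differ exactly when $4^j\leq m$, and your bounds hold as written.

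\paragraph{The tree bound cannot work as planned.} \eqref{WholeTreeInequality} cannot come from a $\gamma$ with values in $\Leaves$, whatever the weights $w_k$. For such $\gamma$:
\begin{itemize}
\item $\Var_\gamma$ is the same in $\Leaves$ and in $\TT$;
\item $\Var_{f\circ\gamma}$ depends only on $f|_{\Leaves}$;
\item every $g\in\LIP_1(\Leaves)$ extends to $\TT$ with the same Lipschitz constant (McShane: $v\mapsto\min_{y\in\Leaves}(g(y)+\rho(v,y))$).
\end{itemize}
Hence
\[
\LVar_\gamma=\sup_{g\in\LIP_1(\Leaves)}\Var_{g\circ\gamma}\geq\LCJ(\Leaves)\,\Var_\gamma\gtrsim\Var_\gamma
\]
by the very inequality you proved first. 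Concretely, your random $f$, normalized and extended to $\TT$, already gives $\E\Var_{f\circ\gamma}\gtrsim\Var_\gamma$. So the blockwise inequality you intend to prove is false.

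The step that breaks is the claim that pairs splitting at level $k$ are controlled by $\E|M_{k+1}-M_k|$ with $M_k=\E[f(X)\mid\mathcal F_k]$. Let $T_1,T_2$ be the leaf sets below the two children of the splitting vertex, and $\bar f_{T_1},\bar f_{T_2}$ the averages of $f$ over them. Jensen gives $\sum_{x\in T_1,\,y\in T_2}|f(x)-f(y)|\geq|T_1||T_2|\,|\bar f_{T_1}-\bar f_{T_2}|$. Thus the martingale increment bounds the inner sum from below, not from above. The leaf differences involve all increments below level $k$ along both paths.

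\paragraph{What is missing.} The obstruction lives on the geodesic segments of $\TT$. The relevant martingale is the bounded difference-quotient martingale of Example~\ref{OneDimensionalExample}, not the conditional expectations of $f$. The paper takes depth $2^N$ and, along each root-to-leaf path, sets $M_k=2^{k-N}(f(v)-f(v'))$ on the $k$-th generation dyadic subsegment $[v,v']$.
\begin{itemize}
\item $\|M\|_{L_\infty}\leq1$ precisely because $f$ is $1$-Lipschitz.
\item $|dM_{k+1}|=2^{k-N}|f(v)+f(v')-2f(U)|$, where $U$ is the midpoint of $[v,v']$.
\item The measure $\mu$ charges pairs of vertices $v_1',v_2'$ at all depths, not just leaves, lying at the depth of $v'$ and whose paths from $v$ separate at $U$.
\item The triangle inequality gives $|f(v_1')-f(v_2')|\leq|f(v)+f(v_1')-2f(U)|+|f(v)+f(v_2')-2f(U)|$.
\item Lemma~\ref{MartingaleOrthogonality} bounds the resulting sum by $\sqrt N$, against $\int\rho\,d\mu\asymp N$ (Proposition~\ref{TechnicalMartingaleProposition}). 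This gives $(\log\text{depth})^{-1/2}$.
\end{itemize}
Your count of logarithmically many dyadic scales is right. But the Cauchy--Schwarz/orthogonality step must be applied to second differences of $f$ along segments, and these are invisible to any $\gamma$ confined to $\Leaves$.
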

\begin{figure}[h!]
\centerline{
\includegraphics[height=6cm]{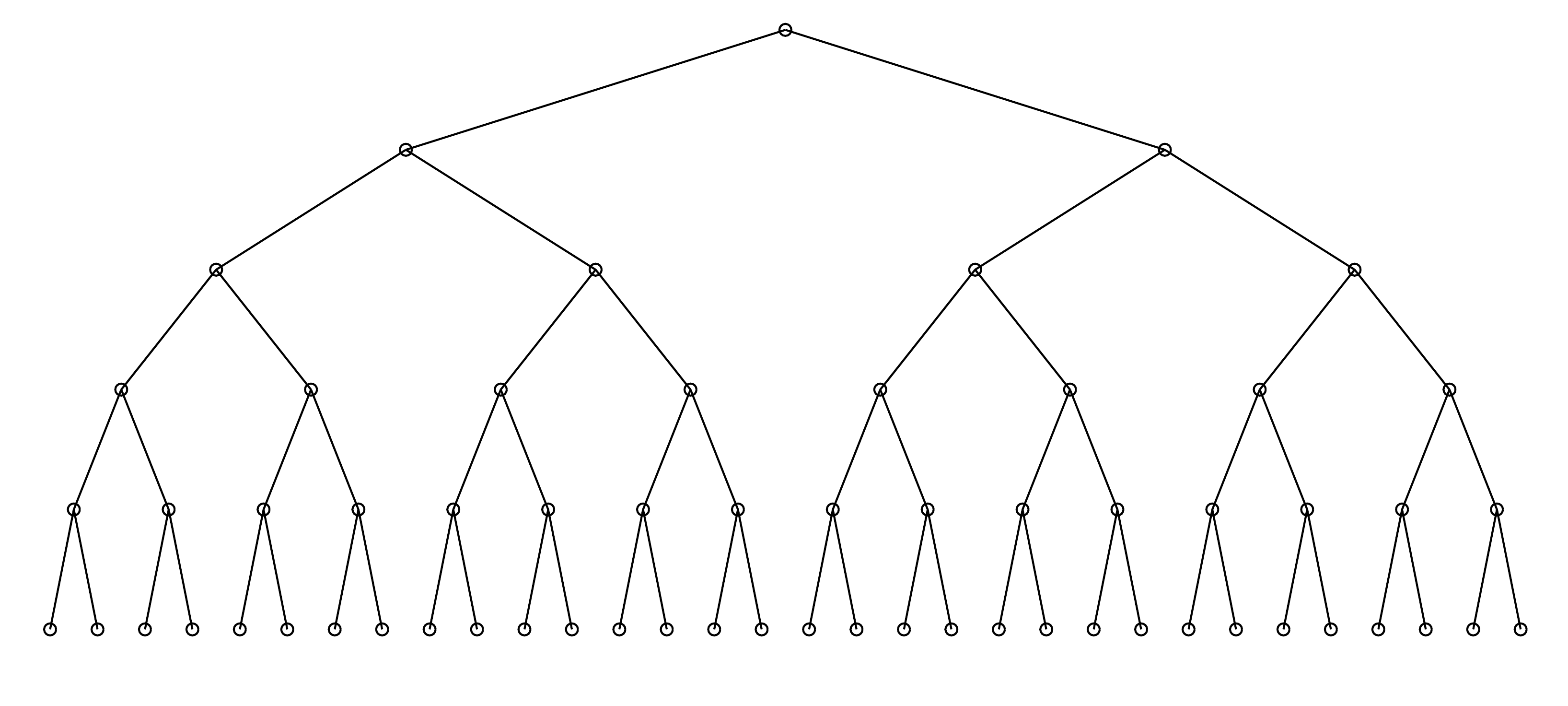}}
\caption{Dyadic tree for~$N=5$.}
\label{DyadicTree}
\end{figure}

The second assertion in Theorem~\ref{TreeTheorem} follows from Theorem~\ref{UltrametricTheorem} since~$\Leaves$ may be equipped with an ultrametric equivalent to the standard graph metric: The distance between two leaves equals the level number of their latest common ancestor.

The proof of Theorem \ref{LaaksoTheorem} and justification of~\eqref{WholeTreeInequality} 
are slightly more involved. Surprisingly,
the crucial ideas for the proofs are the same and rely upon a special martingale and an orthogonality argument for it.
In Section~\ref{S4}, we present this common
idea in the setting of  a general metric space and then consider the two cases of interest.

A comment on the choice of examples. Theorem~\ref{Th1} shows that some infinite dimensional spaces may violate the~$\LCJs$-property. We treat dimension from the metric space standpoint: Non-doubling spaces are infinite dimensional. A more sophisticated Theorem~\ref{LaaksoTheorem} shows that infinite dimension is not necessarily the source of that violation. The Laakso space is finite dimensional, however, it is `highly curved' as it does not have uniqueness of local geodesics. The example of the metric tree shows that the curvature has also nothing to do with the~$\LCJs$-property since trees are non-positively curved and~$0$-hyperbolic. It is interesting whether any metrically doubling subset of a Hilbert space possesses the~$\LCJs$-property. The problem of bi-Lipschitz embedding of such metric spaces into a Euclidean space goes back to~\cite{LangPlaut2001} and has attracted much attention. As we know from Theorem~\ref{UltrametricTheorem},~$\LCJs$ is weaker than embedding into a Euclidean space.

The contents of the rest of the paper are: Section~\ref{S2} is preliminary, it provides notation and special background; Section~\ref{S3} contains reasoning related to the concentration of measure phenomenon and provides the proofs of Theorem~\ref{Th1} and Corollary~\ref{Cor1}; in Section~\ref{S4}, we prove Theorem~\ref{LaaksoTheorem} and the first part of Theorem~\ref{TreeTheorem}, the reasonings are based upon orthogonality techniques; the final Section~\ref{S5} is devoted to the proof of Theorem~\ref{UltrametricTheorem} by construction of a certain random Lipschitz function. 

We wish to thank Roman Karasev for exposition advice and Mikhail Lifshits for advice on ultrametric spaces related to the formulation of Theorem~\ref{UltrametricTheorem}.
\noindent

\section{Preliminaries and notation}\label{S2}

\subsection{Metric spaces}\label{s21}
By a ball~$B_r(x)$ in $\Metr=(\Metr,\rho)$ we always mean the closed ball with a given center~$x\in \Metr$ and radius~$r > 0$. 
For each $x \in \Metr$ and $r > 0$,
we denote by $N_{r}(x) \in \N \cup \{+\infty\}$ the minimal number of balls of radii $r/2$ needed to cover $B_{r}(x)$. We say that $\Metr$ is \textit{metrically doubling} if
\eq{
\sup\limits_{x \in \Metr} \sup\limits_{r > 0}N_{r}(x) < +\infty.
}

Given nonempty metric spaces $\Metr=(\Metr,\rho)$ and $\Netr=(\Netr,\dist)$, define the global Lipschitz constant of a map~$F\colon \Metr \to \Netr$ as
\eq{\label{LipschitzConstantDefinition}
L_F = \sup\limits_{x_1\ne x_2} \frac{\dist(F(x_1),F(x_2))}{\rho(x_1,x_2)}.
}
We say~$F$ is~$L$-Lipschitz, provided~$L_F \leq L$. We denote the set of all $L$-Lipschitz maps from $\Metr$ to $\Netr$ by $\operatorname{LIP}_{L}(\Metr,\Netr)$.

Following \cite{DavidSemmes1997} we say that a metric spaces $\Metr=(\Metr,\rho)$ is \textit{uniformly disconnected} if there is a constant $c > 0$ such that for
each $x \in \Metr$ and $r > 0$ we can find a closed set $A$ such that $B_{cr}(x) \subset A \subset B_{r}(x)$ and
$\Dist(A,\Metr \setminus A) \geq cr$. We also recall that a metric $\rho'$ on  a set $\Metr$ is called an \textit{ultrametric} if
$\rho'(x,z) \le \max\{\rho'(x,y),\rho'(y,z)\}$ for all triples $x,y,z \in \Metr$. An ultrametric space is uniformly disconnected.
\begin{Prop}[Proposition~$15.7$ in~\cite{DavidSemmes1997}]
\label{Prop.ultrametric}
Given a uniformly disconnected metric space~$\Metr=(\Metr,\rho)$\textup, there is an ultrametric $\rho'$ on $\Metr$ and a constant $C > 0$ such that
\eq{
\frac{1}{C}\rho'(x,y) \le \rho(x,y) \le C\rho'(x,y) \quad \hbox{for all} \quad (x,y) \in \Metr \times \Metr.
}
\end{Prop}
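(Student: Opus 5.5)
This is Proposition 15.7 of David--Semmes, so I only need to reprove that a uniformly disconnected space admits an equivalent ultrametric. The plan is to build $\rho'$ from a chain-connectivity relation at every scale.

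\emph{Construction.} Fix the constant $c\in(0,1)$ from the definition of uniform disconnectedness. For $\delta>0$, call $x,y$ \emph{$\delta$-chain connected} if there is a finite chain $x=z_0,z_1,\dots,z_m=y$ with $\rho(z_{i-1},z_i)\le\delta$ for all $i$. This is an equivalence relation. Set
\[
\rho'(x,y)=\inf\{\delta>0 \mid x,y \text{ are } \delta\text{-chain connected}\}.
\]
Concatenating chains shows $\rho'(x,z)\le\max\{\rho'(x,y),\rho'(y,z)\}$, and symmetry is obvious. The one-step chain gives $\rho'\le\rho$.

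\emph{Main step: the reverse bound $\rho\le C\rho'$.} Suppose $x$ and $y$ are $\delta$-chain connected, and suppose for contradiction that $\rho(x,y)>\delta/c^{2}$. Put $r=\rho(x,y)/2$, so that $r> \delta/c^{2}$ and in particular $cr>\delta$. Uniform disconnectedness gives a set $A$ with $B_{cr}(x)\subset A\subset B_r(x)$ and $\Dist(A,\Metr\setminus A)\ge cr>\delta$.

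Since $y\notin B_r(x)$, we have $y\notin A$. Walk along the chain from $x\in A$ to $y\notin A$. At the first exit from $A$ there is a step from a point of $A$ to a point of $\Metr\setminus A$ of length at most $\delta$. That length is also at least $\Dist(A,\Metr\setminus A)>\delta$, a contradiction.

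Hence $\rho(x,y)\le\delta/c^2$ whenever $x,y$ are $\delta$-chain connected. Taking the infimum over $\delta$ gives $\rho\le c^{-2}\rho'$. This also shows $\rho'(x,y)>0$ for $x\ne y$, so $\rho'$ is a genuine ultrametric.

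The whole argument rests on this exit step, and the only care needed is keeping the strict inequality straight. That is why I take $\rho(x,y)>\delta/c^2$ and use $r=\rho(x,y)/2$, which guarantees $cr>\delta$. The proposition then follows with $C=c^{-2}$.
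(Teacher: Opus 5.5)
The paper gives no proof of this statement; it cites David--Semmes, so there is no internal argument to compare against. Your route is the standard proof of this fact, and its structure is sound. You use the chain ultrametric $\rho'(x,y)=\inf\{\delta : x,y \text{ are } \delta\text{-chain connected}\}$ together with the ``first exit from $A$'' argument. The key facts all hold: $\rho'$ is an ultrametric because chains concatenate and the set of admissible $\delta$ is upward closed; $\rho'\le\rho$ follows from the one-step chain; and the exit argument gives the reverse bound and nondegeneracy.

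There is, however, an arithmetic slip at exactly the point you single out as delicate. From $\rho(x,y)>\delta/c^2$ and $r=\rho(x,y)/2$ you only get $r>\delta/(2c^2)$, not $r>\delta/c^2$. Hence $cr>\delta/(2c)$, which exceeds $\delta$ only when $c\le 1/2$.

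You allow any $c\in(0,1)$, and constants close to $1$ do occur; a two-point space is uniformly disconnected with $c=1$. So the claim $cr>\delta$ can fail. For example, with $c=0.9$, $\delta=1$ and $\rho(x,y)=1.3$ you get $cr=0.585<\delta$. Then a chain step of length at most $\delta$ can cross the gap between $A$ and $\Metr\setminus A$, and no contradiction arises.

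The repair is immediate, and either of two fixes works:
\begin{enumerate}[1.]
\item Observe that a set $A$ witnessing uniform disconnectedness for $c$ also witnesses it for every smaller constant. You may therefore assume $c\le 1/2$, and your computation then goes through.
\item Assume $\rho(x,y)>\delta/c$ and pick $r$ with $\delta/c<r<\rho(x,y)$. Then $y\notin B_r(x)$ and $cr>\delta$, so the exit argument applies. This gives $\rho\le c^{-1}\rho'$, i.e.\ the proposition with the better constant $C=1/c$.
\end{enumerate}
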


 \subsection{Martingales}
As we have said, we will use some portions of elementary probability theory. While we assume the reader is familiar with the notion of i.i.d. random variables, the provision of some martingale toolkit seems desirable; we also refer broadly to Chapter~VII of~\cite{Shiryaev2019} for the general martingale theory. We fix a probability space
$(\Omega,\FF,\operatorname{P})$. Let~$\FF_0 \subset \FF_1 \subset \FF_2 \subset\ldots$ be an increasing, possibly finite, sequence of~$\sigma$-subfields of~$\FF$.  Assume for simplicity all these~$\sigma$-fields are finite. In other words, we have a sequence of partitions of~$\Omega$ into finite number of parts that increases in the sense that the partitions deliver more information as~$n$ increases. Recall that the sets of the partition are called atoms, they naturally correspond to indivisible elements of the~$\sigma$-field. Denote the set of all atoms in~$\FF_n$ by~$\AF_n$. The sequence~$M_0, M_1,\ldots$ of random variables is called a martingale, provided it meets two requirements: Each~$M_n$ is~$\FF_n$-measurable and~$\E(M_{n+1}\mid \FF_n) = M_n$. We will use the notation
\eq{
dM_{n} = M_n - M_{n-1},\qquad n \in \N,
}
to denote the martingale difference sequence. If~$\omega \in \AF_n$, then the function~$dM_{n+1}|_{\omega}$ attains finitely many values and has mean zero. One may verify that~$d M_{n}$ and~$d M_m$ are orthogonal in the sense that
\eq{\label{Orthogonal}
\E (dM_{n}\cdot d M_m) = 0, \qquad m\ne n.
}
Finally, we say that a martingale~$M$ is bounded, provided~$\sup_n \|M_n\|_{L_\infty} < \infty$; we also set
\eq{
\|M\|_{L_p}  = \sup_n \|M_n\|_{L_p},\qquad 1 \leq p \leq \infty.
}
The sequence~$\|M_n\|_{L_p}$ is non-decreasing for any~$p \geq 1$; this follows from Jensen's inequality.

\begin{Example}\label{OneDimensionalExample}
Let~$f\colon [0,1]\to \R$ be a Lipschitz function. Let~$\Dyad_n$ be the~$\sigma$-field generated by the~$n$-th generation dyadic subintervals of~$[0,1]$. Consider the piecewise constant functions
\eq{\label{MartingaleExampleFormula}
M_n(t) = 2^n\Big(f(2^{-n} k) - f(2^{-n}(k-1))\Big),\quad t\in \Big[2^{-n}(k-1), 2^{-n}k\Big), \quad k = 1,2,\ldots, 2^n.
}
These functions form a martingale~$M$ adapted to~$\Dyad_n$\textup, which also satisfies the bound
\eq{
\|M\|_{L_\infty}\leq L_f.
}
We also note that
\eq{
dM_{n+1}(t) = \begin{cases}
2^n\big(2f(B) - f(C) - f(A)\big), \quad &t\in \big[A, B\big);\\
2^n\big(f(C) + f(A) - 2f(B)\big), \quad &t\in \big[B, C\big),
\end{cases}
}
where~$A = 2^{-n}(k-1)$\textup,~$B = 2^{-n-1}(2k-1)$\textup, and~$C = 2^{-n}k$. Thus\textup,
\eq{\label{eq210}
\E |d M_{n+1}|\chi_{[2^{-n}(k-1), 2^{-n}k)} = \Big|f(C) + f(A) - 2f(B)\Big|.
}
\end{Example}
\begin{figure}[h!]
\centerline{
\includegraphics[height=3.3cm]{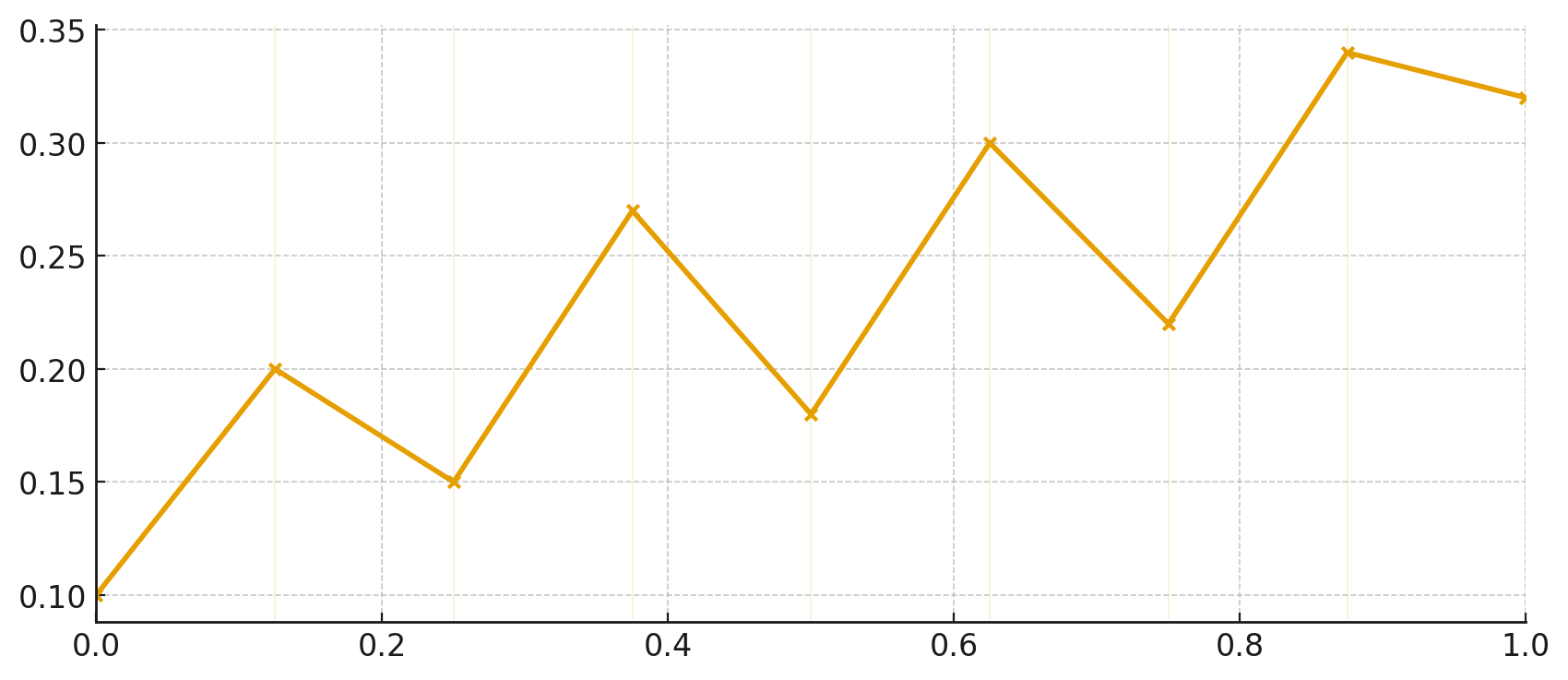}
\includegraphics[height=3.3cm]{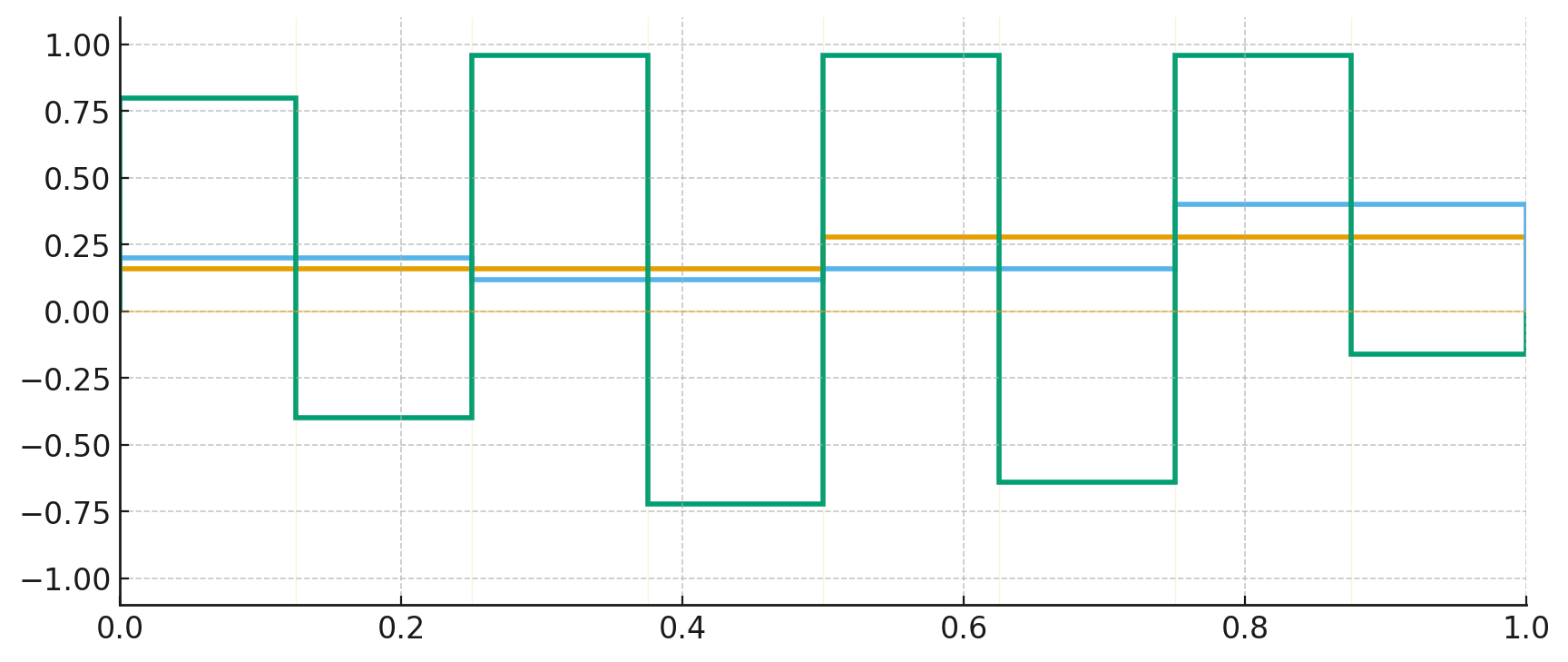}
}
\caption{A Lipschitz function~$f$ and functions~$M_1$ (orange),~$M_2$ (blue), and~$M_3$ (green) constructed from it via~\eqref{MartingaleExampleFormula}.}
\end{figure}

We conclude the martingale toolkit section with an orthogonality argument that plays the pivotal role in the proofs of Theorem~\ref{LaaksoTheorem} and the bound~\eqref{WholeTreeInequality}.
\begin{Lm}
\label{MartingaleOrthogonality}
Let $M$ be a martingale. Then\textup, for each $N \in \mathbb{N}$\textup,
\eq{
\label{eqq.key_martingal_ineq}
\sum\limits_{n=1}^N\|dM_n\|_{L_1}\leq \sqrt{N}\|M\|_{L_\infty}.
}
\end{Lm}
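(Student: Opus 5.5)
The plan is to combine Cauchy--Schwarz with the orthogonality relation~\eqref{Orthogonal}. I will pass from $L_1$ to $L_2$ norms, sum the squared differences using orthogonality, and bound the resulting $L_2$ norm by the $L_\infty$ norm of the martingale.

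\emph{Step 1.} Since $\operatorname{P}$ is a probability measure, $\|dM_n\|_{L_1}\leq \|dM_n\|_{L_2}$ for each $n$. Applying Cauchy--Schwarz in $\R^N$ then gives
\[
\sum_{n=1}^N\|dM_n\|_{L_1}\leq \sum_{n=1}^N\|dM_n\|_{L_2}\leq \sqrt{N}\Big(\sum_{n=1}^N\|dM_n\|_{L_2}^2\Big)^{1/2}.
\]

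\emph{Step 2.} We have $M_N - M_0 = \sum_{n=1}^N dM_n$. Expanding the square and applying~\eqref{Orthogonal} to kill the cross terms yields
\[
\sum_{n=1}^N \|dM_n\|_{L_2}^2=\|M_N-M_0\|_{L_2}^2 .
\]
To get something sharper than a crude bound, I would also use that $M_0$ is orthogonal to every $dM_n$ with $n\geq 1$. This holds because $\E(dM_n\mid\FF_{n-1})=0$ and $M_0$ is $\FF_{n-1}$-measurable. Consequently
\[
\|M_N\|_{L_2}^2=\|M_0\|_{L_2}^2+\sum_{n=1}^N\|dM_n\|_{L_2}^2,
\]
and therefore $\sum_{n=1}^N\|dM_n\|_{L_2}^2\leq \|M_N\|_{L_2}^2\leq \|M_N\|_{L_\infty}^2\leq \|M\|_{L_\infty}^2$.

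\emph{Step 3.} Substituting the bound from Step~2 into Step~1 gives~\eqref{eqq.key_martingal_ineq}.

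The only point needing care is Step~2. Using the orthogonality of $M_0$ to the differences, rather than the triangle inequality $\|M_N-M_0\|_{L_\infty}\leq 2\|M\|_{L_\infty}$, avoids a spurious factor of~$2$. The orthogonality itself follows from the tower property, exactly as in the verification of~\eqref{Orthogonal}, since all the $\sigma$-fields are finite and there are no integrability issues. If $\|M\|_{L_\infty}=\infty$, the inequality is trivial.
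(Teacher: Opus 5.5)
Your proof is correct and follows the paper's argument: $L_1\leq L_2$, Cauchy--Schwarz in $\R^N$, orthogonality of the martingale differences, and $L_2\leq L_\infty$. Your explicit use of the orthogonality of $M_0$ to the differences, giving $\|M_N\|_{L_2}^2=\|M_0\|_{L_2}^2+\sum_{n=1}^N\|dM_n\|_{L_2}^2$, just spells out the step the paper compresses into the single inequality $\big(\sum_{n=1}^N\|dM_n\|_{L_2}^2\big)^{1/2}\leq\|M\|_{L_2}$.
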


\begin{proof}
We use the aforementioned orthogonality in combination with the Cauchy--Schwarz inequality:
\eq{
\begin{split}
&\sum\limits_{n=1}^N\|dM_n\|_{L_1} \leq \sum\limits_{n=1}^N\|dM_n\|_{L_2} \\ 
&\leq \sqrt{N} \Big(\sum\limits_{n=1}^N\|dM_{n}\|_{L_2}^2\Big)^\frac12 
 \Leqref{Orthogonal}  \sqrt{N}\|M\|_{L_2} \leq  \sqrt{N}\|M\|_{L_\infty}.
\end{split}
}
\end{proof}

\subsection{Laakso space}\label{sLaakso}
Here we briefly recall the simplest example of the so-called Laakso-type spaces. The literature concerning those spaces is vast. We refer the reader to the pioneering paper \cite{Laakso2000} and the recent survey in 
\cite{Capolli2024} (see also references therein). We will be working with the particular case of a Laakso-type space and will follow the exposition in~\cite{LangPlaut2001}. The metric space will be constructed as a limit of a sequence of finite metric spaces~$X_i$ such that~$X_0 \subset X_1 \subset X_2 \subset \ldots$. The space~$X_0$ is the pair of points~$\{0,1\}$ considered as the subset of the unit interval, which is identified with the graph~$G_0$. To construct~$X_1$, we split the unit interval it into four intervals of equal length and replace the two middle intervals with a rhombus as shown on Fig.~\ref{Fig:Laakso}. We equip the obtained graph~$G_1$ with~$1/4$ of the natural graph metric and set~$X_1$ to be its set of vertices. Note that~$X_0 \subset X_1$ in a natural way. Then, we replace each edge of~$G_1$ with a copy of~$G_1$, get a new graph~$G_2$ as shown on Fig.~\ref{Fig:Laakso}, equip it with~$1/16$ of the natural graph metric, and set~$X_2$ to be its set of vertices. We again have~$X_1 \subset X_2$ in a natural way.  Then we replace each edge of~$G_2$ with a copy of~$G_1$, equip the obtained graph~$G_3$ with~$1/64$ of the standard graph metric, and so forth. The Laakso space is the Gromov--Hausdorff limit of the sequence~$\{X_i\}_{i\in \N}$. One may verify that the obtained metric space is doubling. The will also use the  corresponding graphs~$G_i$ in our notation since we will be working not only with the vertices~$X_i$, but with the paths in~$G_i$ as well. We treat~$G_i$ as metric space, and the metric is~$4^{-i}$ of the natural graph metric. 

\begin{figure}[h!]
\centerline{
\includegraphics[height=3cm]{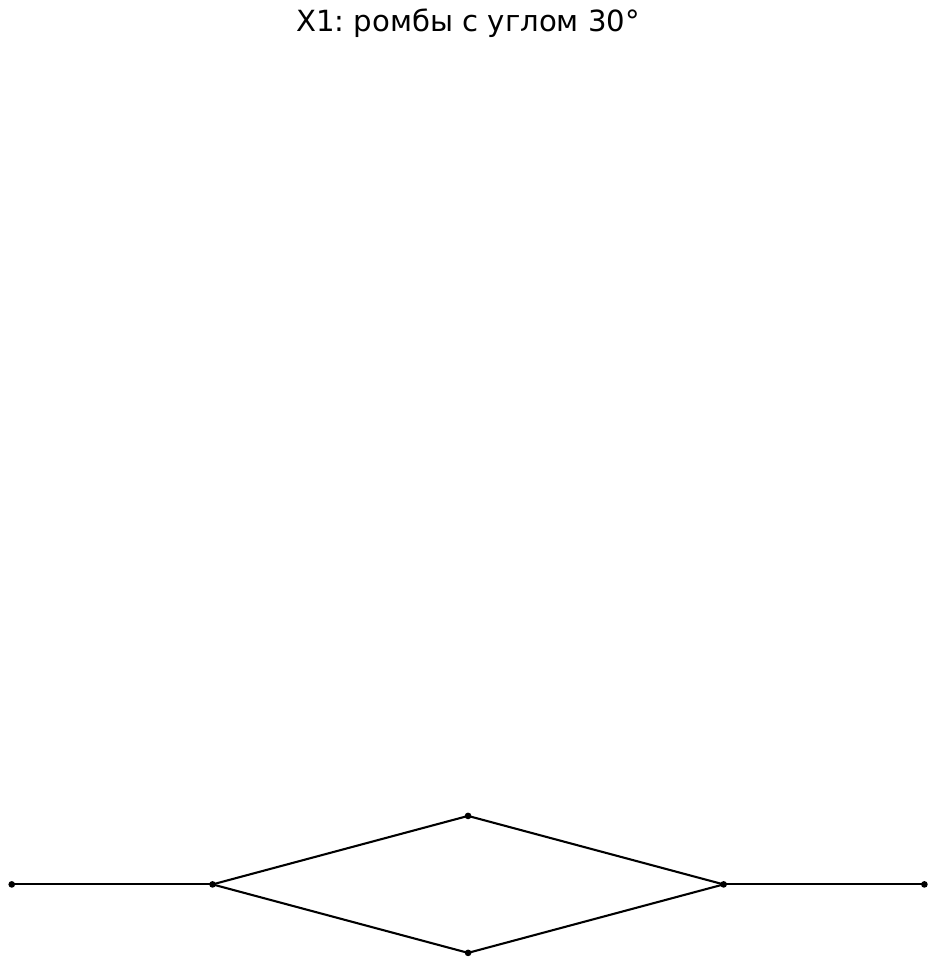}
}
\centerline{
\includegraphics[height=2.9cm]{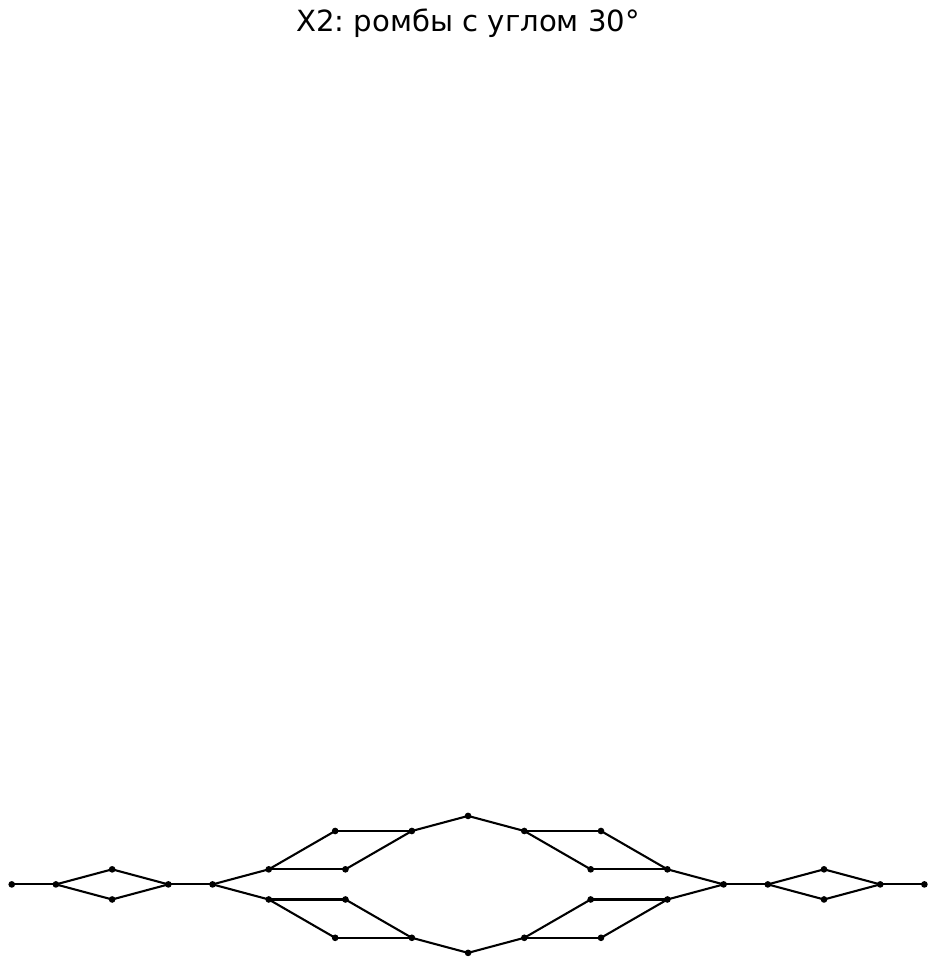}
}
\centerline{
\includegraphics[height=2.8cm]{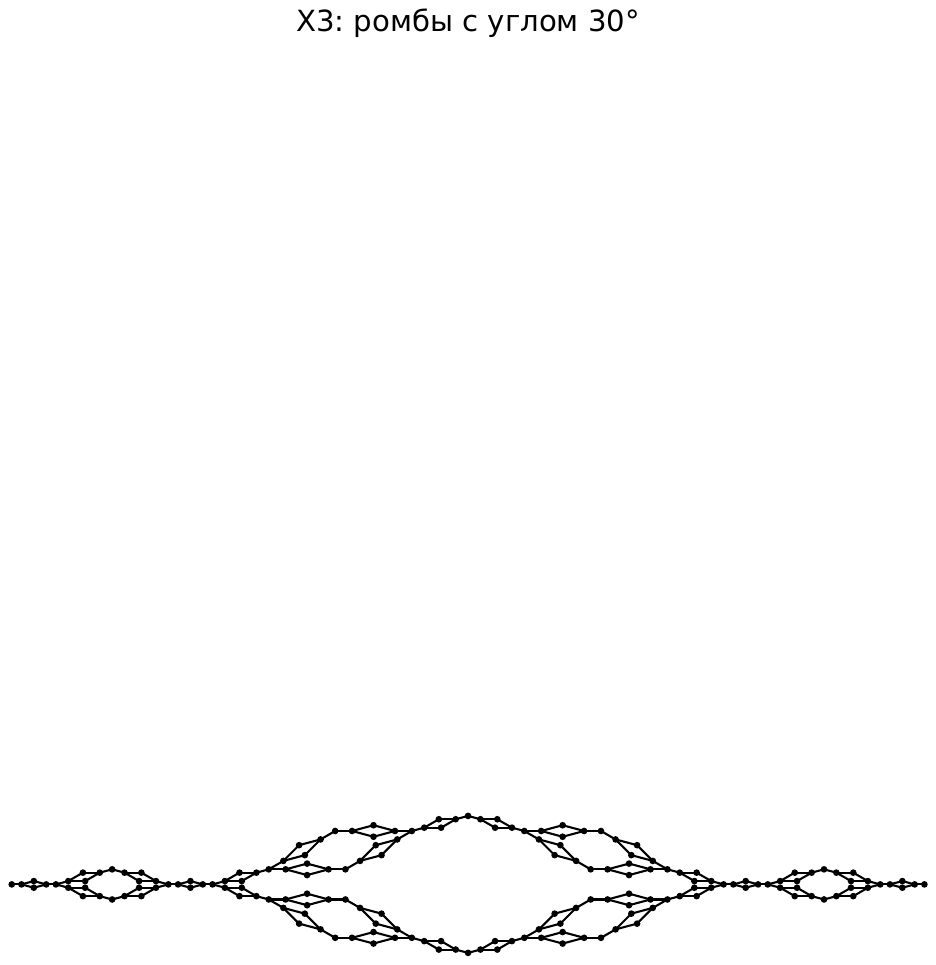}
}
\caption{The graphs~$G_1$,~$G_2$, and~$G_3$.}
\label{Fig:Laakso}
\end{figure}

\subsection{Catching jumps with linear functions on~$\R^d$}\label{s24}
In this subsection, we will show the bound~$\LCJ(\R^d)\gtrsim d^{-\frac12}$ claimed before. Let~$\pi_h$ be the orthogonal projection onto the line~$\R\cdot h$ spanned by a vector~$h\in \R^d$; we will also write~$\pi_j$ for the said projection in the case~$h=e_j$, the~$j$-th basic vector,~$j=1,2,\ldots, d$. Clearly,~$\pi_h\in \LIP_1(\R^d)$, provided~$\|h\|=1$. Now we will show what can be achieved by substituting~$h$ for~$f$ into~\eqref{eqq.BLV}. First, the inequality 
\eq{
\Var_\gamma \leq \sum\limits_{j=1}^d \Var_{\pi_j\circ\gamma}
}
yields the bound~$\Var_\gamma \leq d \LVar_\gamma$ for any~$\gamma \colon [0,1]\to \R^d$, which implies~$\LCJ(\R^d) \geq d^{-1}$. Note that one cannot obtain a better estimate by relying on coordinate projections~$\pi_j$ only since for the mapping
\eq{
\gamma(t) = \sum\limits_{j=1}^{[td]}e_j + \{td\}e_{[td]},\qquad t\in [0,1],
}
we have~$\Var_{\gamma} = d$ and~$\Var_{\pi_j\circ\gamma} = 1$ for any~$j=1,2,\ldots, d$; here we have used the notation~$[x]$ and~$\{x\}$ for the integer and fractional parts of a number~$x\in\R$.

To prove a better bound, pick~$\{\eps_j\}_{j=1}^d$ be the centered independent Bernoulli random variables and consider~$h = d^{-\frac12}\sum_j \eps_j e_j$ to be a random vector of unit length. Then, for any~$v\in \R^d$, we have
\eq{
\E|\pi_{h}[v]| = \E d^{-\frac12}\Big|\sum\limits_{j=1}^d \eps_j v_j\Big| \asymp d^{-\frac12} \|v\|
}
by Khintchine's inequality. Thus, for any mapping~$\gamma \colon [0,1]\to \R^d$, we have
\eq{
\E\Var_{h\circ\gamma} \asymp d^{-\frac12} \Var_{\gamma},
}
which implies~$\LCJ(\R^d) \gtrsim d^{-\frac12}$. Note that the only Lipschitz functions we have used to establish this bound are the linear ones. Can one obtain better bounds by employing some more sophisticated functions and fill the gap between this bound and the upper estimate obtained in Theorem~\ref{Th1}?

\section{What is given by the concentration of measure?}\label{S3}

\begin{Rem}\label{UnitLength}
For a Banach space~$X$,
\eq{
\ELL(X) = \inf \Set{\LVar_\gamma}{\gamma \colon [a,b] \to X, \var_\gamma = 1}.
}
According to the definition~\eqref{EllOfM}\textup, the proof reduces to the following assertion\textup: For any~$\gamma \colon [a,b]\to X$ there exists a mapping~$\tilde{\gamma} \colon [a,b]\to X$ such that~$\var_{\tilde{\gamma}} = 1$ and~$\LVar_{\tilde{\gamma}} = \LVar_{\gamma}/ \var_{\gamma}$\textup, see~\eqref{EllOfM}. To justify the assertion, pick an arbitrary mapping~$\gamma\colon [a,b] \to X$. Then, for any~$\lambda > 0$\textup, consider the mapping
\eq{
\gamma_\lambda(t) = \lambda \gamma(t),\qquad t\in [a,b];
}
which is a dilation of~$\gamma$. Then\textup,~$\var_{\gamma_\lambda} = \lambda \var_\gamma$ and if we set~$f_\lambda(x) = \lambda f(x/\lambda)$\textup, then~$\var_{f_\lambda \circ \gamma_\lambda} = \lambda \var_{f\circ \gamma}$ and~$f_\lambda$ is~$1$-Lipschitz\textup, provided~$f$ is. The choice~$\lambda = (\var_\gamma)^{-1}$ proves the assertion.
\end{Rem}

 \begin{proof}[Derivation of Corollary~\ref{Cor1} from Theorem~\ref{Th1}]
Let~$X_1,X_2,\ldots, X_n,\ldots$ be linear subspaces of~$X$, each~$X_n$ being~$2$-linearly isomorphic to~$\R^{2^n}$; such spaces exist by the Dvoretsky theorem (see, e.g., Section~$19$ in~\cite{DJT1995}). By Theorem~\ref{Th1} and Remark~\ref{UnitLength}, there exists a mapping~$\gamma_n\colon [0,1] \to X_n$ of unit variation such that
\eq{
\LVar_{\gamma_n} \lesssim \sqrt{n}2^{-n/2}.
}
Construct the concatenation of these mappings
\eq{
\tilde{\gamma}_{n+1}(t) =
\begin{cases}
\tilde{\gamma}_n(n) + \gamma_{n+1}(t) - \gamma_{n+1}(0),\quad &t\in [n,n+1];\\
\tilde{\gamma}_n(t),\quad &t\in [0,n).
\end{cases}
}
In this case,~$\var_{\tilde{\gamma}_n} = n$ and we have
\eq{
\var_{f\circ \tilde{\gamma}_n} \leq \sum\limits_{m \leq n} \sqrt{m} 2^{-\frac{m}{2}}\lesssim 1
}
for any~$1$-Lipschitz function~$f$.
\end{proof}

For the proof of Theorem~\ref{Th1}, we need to modify the quantity~$\ELL(\Metr)$ a little bit. Let~$\bar{x} = \{x_n\}_n$ and~$\bar{y} = \{y_n\}_n$ be two sequences in a metric space~$\Metr$. Set
\eq{
\var(\bar x, \bar y) = \sum\limits_n \rho(x_n,y_n)
}
and define
\eq{
\LVar(\bar x, \bar y) = \sup \Set{\sum\limits_n |f(x_n) - f(y_n)|}{f \colon \Metr \to \R \text{ is a $1$-Lipschitz function}}.
}
Finally, let
\eq{
\ELLs(\Metr) = \inf\Set{\frac{\LVar(\bar x, \bar y)}{\var(\bar x, \bar y)}}{\bar x, \bar y \text{ are finite sequences in } \Metr,\quad \bar{x} \ne \bar{y}}.
}

\begin{Lm}\label{FromCurvesToSequences}
For any metric space~$\Metr$\textup, we have~$\operatorname{LCJ}_{\rm seq}(\Metr) = \operatorname{LCJ}(\Metr)$.
\end{Lm}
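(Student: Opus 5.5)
We prove the two inequalities $\ELLs(\Metr) \le \ELL(\Metr)$ and $\ELL(\Metr)\le \ELLs(\Metr)$ separately. Both reduce to converting one type of object into the other while controlling $\var$ and $\LVar$ in the right direction.

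For $\ELLs(\Metr)\le \ELL(\Metr)$, we fix $\gamma\in\BV([a,b],\Metr)$ and $\eps>0$. We choose a partition $\T=\{t_i\}_{i=0}^N$ with $\sum_i\rho(\gamma(t_{i-1}),\gamma(t_i))\ge \var_\gamma-\eps$, and set $x_i=\gamma(t_{i-1})$, $y_i=\gamma(t_i)$. Then $\var(\bar x,\bar y)\ge \var_\gamma-\eps$. For every $1$-Lipschitz $f$,
\[
\sum_i|f(x_i)-f(y_i)|\le \var_{f\circ\gamma}\le\LVar_\gamma,
\]
so $\LVar(\bar x,\bar y)\le \LVar_\gamma$. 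Letting $\eps\to0$ gives the inequality; if $\var_\gamma=0$, then $\gamma$ is constant and there is nothing to check.

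For the reverse inequality $\ELL(\Metr)\le\ELLs(\Metr)$, we take finite sequences $\bar x=(x_1,\dots,x_N)$ and $\bar y=(y_1,\dots,y_N)$, discard the pairs with $x_n=y_n$, and build a step curve $\gamma$ that alternates $x_1,y_1,x_2,y_2,\dots$. The curve's variation picks up the unwanted \emph{connecting} jumps $\rho(y_n,x_{n+1})$, so we must kill their contribution. The key trick is to repeat the whole pattern many times with the connecting jumps traversed in a way that averages out. Concretely, we do the following.
\begin{enumerate}[(i)]
\item We replace each pair $(x_n,y_n)$ by $K$ back-and-forth oscillations $x_n,y_n,x_n,y_n,\dots$ on consecutive small subintervals before moving on to pair $n+1$.
\item We check that then $\var_\gamma = 2K\var(\bar x,\bar y) + \sum_n\rho(x_n,x_{n+1})$. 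More precisely, $\var_\gamma$ lies between $2K\var(\bar x,\bar y)$ and that quantity plus the connecting terms: for a step function the supremum is attained by the partition hitting every step, and the triangle inequality bounds the connecting jumps by a fixed constant $C(\bar x,\bar y)$ independent of $K$.
\item For any $1$-Lipschitz $f$, the same computation gives $\var_{f\circ\gamma}\le 2K\sum_n|f(x_n)-f(y_n)| + C$.
\end{enumerate}
Here $C$ is independent of $f$, since $|f(u)-f(v)|\le\rho(u,v)$.

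It follows that $\LVar_\gamma\le 2K\,\LVar(\bar x,\bar y)+C$ and $\var_\gamma\ge 2K\var(\bar x,\bar y)$. Letting $K\to\infty$, the ratio $\LVar_\gamma/\var_\gamma$ tends to at most $\LVar(\bar x,\bar y)/\var(\bar x,\bar y)$, which proves $\ELL(\Metr)\le\ELLs(\Metr)$.

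The main obstacle is the bookkeeping in step (ii): we need to verify that the variation of a piecewise constant map equals the sum of the jumps between consecutive values. This is immediate, since refining a partition never decreases the sum by the triangle inequality, and taking partition points inside each constancy interval realizes all the jumps.
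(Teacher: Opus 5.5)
Your proof is correct and follows essentially the same route as the paper. A near-optimal partition converts a curve into a pair of sequences, giving $\ELLs(\Metr)\le\ELL(\Metr)$. For the reverse inequality $\ELL(\Metr)\le\ELLs(\Metr)$, you use a step map oscillating $\asymp K$ times between $x_n$ and $y_n$ before passing to $x_{n+1}$; its connecting jumps contribute only a $K$-independent constant, so letting $K\to\infty$ finishes the argument, exactly as in the paper's steps (i)--(iii).
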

\begin{proof}
The proof naturally splits into two parts.

The {\bf first part} is the verification of the inequality~$\ELLs(\Metr)\leq \ELL(\Metr)$. Fix~$\eps > 0$ and pick~$\gamma \colon [0,1]\to \Metr$ such that
\eq{
\frac{\LVar_\gamma}{\Var_\gamma} \leq \ELL(\Metr)+\eps.
}
By the very definition, there exists a finite increasing sequence~$\{t_{i}\}_{i=0}^{N} \subset [0,1]$ such that
\eq{
\Var_\gamma \leq (1+\eps)\sum\limits_{i=1}^N\rho(\gamma(t_{i-1}),\gamma(t_i)).
}
We set~$x_i := t_{i-1}$,~$y_i := t_i$ for~$i=1,2,\ldots, N$ and obtain
\eq{
\begin{aligned}
\Var(\bar x, \bar y)  &\geq (1+\eps)^{-1} \Var_\gamma;\\
\LVar(\bar x, \bar y) &\leq \LVar_\gamma,
\end{aligned}
}
which yields
\eq{
\ELLs(\Metr) \leq \frac{\LVar(\bar x, \bar y)}{\Var(\bar x, \bar y)} \leq (1+\eps) \frac{\LVar_\gamma}{\Var_\gamma}\leq (1+\eps)(\ELL(\Metr) + \eps).
}
Since~$\eps$ is arbitrarily small, this implies the desired inequality.

In the {\bf second part}, we wish to prove the bound~$\ELL(\Metr) \leq \ELLs(\Metr)$: For any pair~$(\bar x, \bar y)$ of finite sequences and a given~$\eps > 0$, we wish to construct a mapping~$\gamma \colon [0,1]\to \Metr$ such that
\eq{\label{eq310}
\frac{\LVar_\gamma}{\var_\gamma} \leq \frac{\LVar(\bar x,\bar y)}{\var(\bar x, \bar y)} + \eps.
}
Pick a large number~$K$. Define the searched-for mapping~$\gamma$ by the following algorithm: it jumps from~$x_1$ to~$y_1$ and back~$2K$ times, then goes from~$x_1$ to~$x_2$; jumps from~$x_2$ to~$y_2$ and back~$2K$ times, then goes from~$x_2$ to~$x_3$; and so forth till~$x_N$. Then,~$\Var_\gamma \geq 2K\var(\bar x, \bar y)$.

Let~$f\in \LIP_1(\Metr)$. Then, the total variation of~$f\circ \gamma$ does not exceed
\eq{
2K \LVar(\bar x, \bar y) + \sum_j \rho(x_j,x_{j+1}).
}

Thus, 
\eq{
\frac{\LVar_\gamma}{\var_\gamma} \leq \frac{\LVar(\bar x, \bar y)}{\var(\bar x,\bar y)} + o(1), \quad \text{as}\quad  K \to \infty,
}
which justifies~\eqref{eq310} by the choice of sufficiently large~$K$.
\end{proof}
\begin{Cor}\label{MeasureRemark}
One may go further and replace a pair of sequences by a finite measure~$\mu$ on~$\Metr\times \Metr$\textup:
\mlt{\label{MeasureRemarkIdentity}
\ELL(\Metr)\\ = \inf \Sset{\sup \Set{\frac{\int_{\Metr\times \Metr} |f(x) - f(y)|\,d\mu(x,y)}{\int_{\Metr\times \Metr} \rho(x,y)\,d\mu(x,y)}}{f \in \LIP_1(\Metr)}}{\mu\text{ measures }\Metr\times \Metr},
}
we assume that~$\mu$ is not concentrated on the diagonal~$\set{(x,x)\in \Metr\times \Metr}{x\in \Metr}$ and that the integral in the denominator is finite. 
\end{Cor}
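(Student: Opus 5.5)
By Lemma~\ref{FromCurvesToSequences} it suffices to show that the right-hand side of~\eqref{MeasureRemarkIdentity}, call it~$I(\Metr)$, equals~$\ELLs(\Metr)$. For a measure~$\mu$ write
\[
R(\mu)=\sup_{f\in\LIP_1(\Metr)}\frac{\int|f(x)-f(y)|\,d\mu}{\int\rho\,d\mu}.
\]
The plan has three steps: the easy inequality~$I(\Metr)\le\ELLs(\Metr)$, the case of finitely supported measures, and a reduction of general measures to that case. I will carry out the reduction assuming~$\mu$ is concentrated on~$S\times S$ for some separable~$S\subset\Metr$; this holds for tight (Radon) measures and whenever~$\Metr$ is separable. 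This assumption is my addition, not part of the statement as the paper words it, and Step~3 is proved only under it.

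\textbf{Step 1: $I(\Metr)\le\ELLs(\Metr)$.} For finite sequences~$\bar x\ne\bar y$, take~$\mu=\sum_n\delta_{(x_n,y_n)}$. This measure is not concentrated on the diagonal, and~$R(\mu)=\LVar(\bar x,\bar y)/\var(\bar x,\bar y)$ by definition. Taking the infimum over pairs of sequences gives the inequality.

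\textbf{Step 2: finitely supported measures.} Let~$\mu=\sum_{i=1}^m w_i\delta_{(x_i,y_i)}$ with~$w_i>0$ and~$\rho(x_i,y_i)>0$ for some~$i$. I claim~$R(\mu)\ge\ELLs(\Metr)$.
\begin{itemize}
\item If all~$w_i$ are rational, multiply by a common denominator~$q$. Since~$R$ is invariant under scaling, $R(\mu)=R(q\mu)$. The measure~$q\mu$ corresponds to a finite sequence pair in which~$(x_i,y_i)$ is repeated~$qw_i$ times, so~$R(\mu)\ge\ELLs(\Metr)$.
\item For real weights, approximate~$w$ by rational~$w'$ with~$|w_i-w'_i|\le\eta w_i$. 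For every~$f\in\LIP_1$ we have~$|f(x_i)-f(y_i)|\le\rho(x_i,y_i)$. Hence both numerator and denominator change by at most the factor~$1\pm\eta$, uniformly in~$f$, so~$R(\mu)\ge\frac{1-\eta}{1+\eta}R(\mu')\ge\frac{1-\eta}{1+\eta}\ELLs(\Metr)$. Let~$\eta\to0$.
\end{itemize}

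\textbf{Step 3: general~$\mu$ (the main obstacle).} Let~$\mu$ be finite, not concentrated on the diagonal, with~$D=\int\rho\,d\mu\in(0,\infty)$, concentrated on~$S\times S$ with~$S$ separable. Fix~$\eps>0$.
\begin{itemize}
\item \emph{Discretize.} Pick a countable~$\eps$-net~$\{z_j\}$ of~$S$. Define a Borel map~$\varphi\colon S\to\{z_j\}$ by sending~$x$ to the~$z_j$ with least index~$j$ such that~$\rho(x,z_j)\le\eps$. Let~$\nu=(\varphi\times\varphi)_\#\mu$, which has countable support.
\item \emph{Compare.} The triangle inequality gives~$|\rho(\varphi x,\varphi y)-\rho(x,y)|\le2\eps$. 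For~$f\in\LIP_1$ it also gives
\[
\Big||f(\varphi x)-f(\varphi y)|-|f(x)-f(y)|\Big|\le2\eps .
\]
Integrating, both the denominators and the numerators (for every~$f$ simultaneously) of~$\mu$ and~$\nu$ differ by at most~$2\eps\mu(\Metr\times\Metr)$.
\item \emph{Truncate.} Since~$\int\rho\,d\nu<\infty$, drop all but finitely many atoms of~$\nu$. The domination~$|f(x)-f(y)|\le\rho(x,y)$ bounds the change in numerator and denominator, uniformly in~$f$, by an arbitrarily small tail~$\tau$.
\item \emph{Conclude.} Call the resulting finitely supported measure~$\nu'$. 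For small~$\eps,\tau$ its denominator is positive, so by Step~2 there is~$f$ with numerator at least~$(\ELLs(\Metr)-\eps)$ times its denominator. Transferring back,
\[
\int|f(x)-f(y)|\,d\mu\ge(\ELLs(\Metr)-\eps)\big(D-2\eps\mu(\Metr\times\Metr)-\tau\big)-2\eps\mu(\Metr\times\Metr)-\tau .
\]
\end{itemize}
Here~$\mu$ is fixed, so~$\mu(\Metr\times\Metr)$ and~$D$ are fixed, and letting~$\eps,\tau\to0$ gives~$R(\mu)\ge\ELLs(\Metr)$.

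Hence, for every finite~$\mu$ concentrated on a separable set, $I(\Metr)\ge\ELLs(\Metr)$. Combined with Step~1 and Lemma~\ref{FromCurvesToSequences}, this gives~\eqref{MeasureRemarkIdentity} for such measures. Measures not concentrated on any separable set would need a separate argument, which I do not supply.
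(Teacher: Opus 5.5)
Your proof is correct and follows the same route as the paper. The measures $\sum_n\delta_{(x_n,y_n)}$ give $I(\Metr)\le\ELLs(\Metr)$, and discretizing $\mu$ to countably many atoms, truncating to finitely many, and rationalizing the weights gives the reverse inequality via Lemma~\ref{FromCurvesToSequences}.

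The separability hypothesis you flag costs nothing relative to the paper. Its partition of $\Metr\times\Metr$ into small Borel sets $Q_\alpha$, with countably many of them carrying all of $\mu$, tacitly needs the same assumption. Your push-forward by $\varphi\times\varphi$ is just a cleaner way to carry out that discretization.
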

\begin{proof}
We will denote the quantity on the right hand side of~\eqref{MeasureRemarkIdentity} by~$\mathfrak{S}$ during the proof.

To each pair~$(\bar x, \bar y)$ of finite sequences of points in~$\Metr$, there corresponds a measure~$\mu_{(\bar x,\bar y)} := \sum_{i=1}^N \delta_{(x_i,y_i)}$. This correspondence and Lemma~\ref{FromCurvesToSequences} show that~$\ELL(\Metr)$ is greater or equal to~$\mathfrak{S}$. To prove the reverse bound, we wish to somehow approximate an arbitrary probability measure~$\mu$ on~$\Metr\times \Metr$ by the measures of the type~$\sum_{i=1}^N \delta_{(x_i,y_i)}$. Note that by duplicating points in the sequences~$\bar x$ and~$\bar y$ and scaling by a multiplicative constant, we may enlarge the class of possible approximations by the measures of the type~$\sum_{i=1}^N a_i \delta_{(x_i,y_i)}$,~$a_i \in \mathbb{Q}$.

To this end, fix~$\eps > 0$ and a finite measure~$\mu$ on~$\Metr\times \Metr$ such that~$\int_{\Metr\times\Metr} \rho(x,y)\,d\mu(x,y)$ is finite and positive and, moreover,
\eq{
\frac{\int_{\Metr\times \Metr} |f(x) - f(y)|\,d\mu(x,y)}{\int_{\Metr\times \Metr} \rho(x,y)\,d\mu(x,y)} \leq (1+\eps)\mathfrak{S}.
} 
We split~$\Metr\times \Metr$ into a disjoint union of Borel sets~$\{Q_\alpha\}_{\alpha}$ such that~$\diam Q_\alpha < \delta$, the latter symbol denotes yet another small parameter. If a set~$Q_\alpha$ is not contained in the diagonal, let~$x_\alpha \in Q_\alpha$ be a point not on the diagonal. Construct a measure
\eq{\label{muepsDef}
\mu_\delta = \sum\limits_{\alpha} \mu(Q_\alpha) \delta_{x_\alpha}.
}
In this sum, at most countable number of summands  are non-zero since~$\mu$ is finite; the measure~$\mu_\delta$ is finite as well. Note that
\eq{
\Big|\int\limits_{\Metr\times \Metr} g(x,y)\,d\mu_\delta(x,y) - \int\limits_{\Metr\times \Metr} g(x,y)\,d\mu(x,y)\Big| \leq \delta
}
for any~$g\in \LIP_1(\Metr\times\Metr)$. Thus,
\eq{
\frac{\int_{\Metr\times \Metr} |f(x) - f(y)|\,d\mu_\delta(x,y)}{\int_{\Metr\times \Metr} \rho(x,y)\,d\mu_\delta(x,y)} \leq \frac{\int_{\Metr\times \Metr} |f(x) - f(y)|\,d\mu(x,y) + 2\delta}{\int_{\Metr\times \Metr} \rho(x,y)\,d\mu(x,y) - \delta} 
}
for any~$f\in \LIP_1(\Metr)$. If~$\delta$ is sufficiently small, this yields
\eq{
\frac{\int_{\Metr\times \Metr} |f(x) - f(y)|\,d\mu_\delta(x,y)}{\int_{\Metr\times \Metr} \rho(x,y)\,d\mu_\delta(x,y)} \leq (1+\eps)^2 \mathfrak{S}
}
for some choice of~$f$. The same approximation argument allows to restrict the sum in~\eqref{muepsDef} to a finite set of indices and replace the coefficients~$\mu(Q_\alpha)$ with their rational approximations; let this modified measure be~$\tilde{\mu}$. As we have seen before, such measures are generated by pairs of finite sequences of points in~$\Metr$, and, therefore,
\mlt{
\frac{\LVar(\bar x, \bar y)}{\Var(\bar x, \bar y)}  = \frac{\int_{\Metr\times \Metr} |f(x) - f(y)|\,d\tilde{\mu}(x,y)}{\int_{\Metr\times \Metr} \rho(x,y)\,d\tilde{\mu}(x,y)}\\
 \leq (1+\eps) \frac{\int_{\Metr\times \Metr} |f(x) - f(y)|\,d\mu_\delta(x,y)}{\int_{\Metr\times \Metr} \rho(x,y)\,d\mu_\delta(x,y)} \leq (1+\eps)^3 \mathfrak{S}
}
for some finite sequences~$\bar x$ and~$\bar y$. Since~$\eps$ is arbitrarily small, this yields~$\ELL(\Metr) \leq \mathfrak{S}$.
\end{proof}

Let~$S^{d-1}$ be the unit sphere in~$\R^d$. Let~$\lambda_{d-1}$ be the Lebesgue measure on~$S^{d-1}$. It is convenient to normalize~$\lambda_{d-1}$ to the probability measure and treat~$(S^{d-1},\lambda_{d-1})$ as a probability space. We will be using the following classical theorem, see Corollary~$19.31$ in~\cite{DJT1995}.
\begin{Th}[Levy's concentration inequality]
Assume~$d \geq 4$. Let~$f\colon S^{d-1}\to \R$ be a~$1$-Lipschitz function\textup, consider it as a random variable. Let~$c$ be its median. Then\textup,
\eq{\label{ConcentrationOfMeasure}
\lambda_{d-1}\Big(\Set{x\in S^{d-1}}{|f(x) - c| > \eps}\Big) \leq 6e^{-\frac{d\eps^2}{8}}.
}
\end{Th}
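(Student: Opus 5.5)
The statement is the classical concentration inequality, and I would derive it from Lévy's spherical isoperimetric inequality together with an elementary estimate of the measure of a spherical cap. It is convenient to measure distances on~$S^{d-1}$ by the geodesic (angular) metric~$\theta$. Since the Euclidean distance never exceeds the geodesic one, any function that is~$1$-Lipschitz with respect to the Euclidean metric is also~$1$-Lipschitz with respect to~$\theta$. Hence it suffices to prove the claim for~$\theta$-Lipschitz~$f$, with~$\theta$-neighbourhoods~$A_\eps = \set{x}{\theta(x,A)\leq \eps}$.

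\textbf{Step 1 (reduction to neighbourhoods).} Let~$A = \set{x}{f(x)\leq c}$. Since~$c$ is a median,~$\lambda_{d-1}(A)\geq \frac12$. Since~$f$ is~$1$-Lipschitz,~$A_\eps\subset \set{x}{f(x)\leq c+\eps}$. Therefore
\[
\lambda_{d-1}\big(\set{x}{f(x)>c+\eps}\big)\leq 1-\lambda_{d-1}(A_\eps).
\]
The same argument applied to~$-f$, whose median is~$-c$, controls the event~$\{f< c-\eps\}$. It therefore suffices to show that~$1-\lambda_{d-1}(A_\eps)\leq 3e^{-d\eps^2/8}$ whenever~$\lambda_{d-1}(A)\geq\frac12$.

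\textbf{Step 2 (isoperimetry).} By Lévy's isoperimetric inequality (Lévy, Schmidt; it can also be obtained by two-point symmetrization), among all Borel sets of a given measure the geodesic cap has the smallest~$\eps$-neighbourhood. A set of measure at least~$\frac12$ dominates a hemisphere~$H=\{x_1\leq 0\}$. Hence
\[
1-\lambda_{d-1}(A_\eps)\leq 1-\lambda_{d-1}(H_\eps)=\lambda_{d-1}\big(\set{x}{x_1> \sin\eps}\big).
\]
I would quote this inequality rather than reprove it. If a self-contained route were wanted, this is where the real work lies: I would try the symmetrization and compactness argument in the Hausdorff metric, as in Benyamini's exposition. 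This is the main obstacle; everything else is computation.

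\textbf{Step 3 (cap estimate).} For~$t\in(0,1)$ I would show
\[
\lambda_{d-1}\big(\set{x}{x_1>t}\big)\leq e^{-(d-1)t^2/2}.
\]
One route is the cone/ball comparison: the cap lies in a ball of radius~$\sqrt{1-t^2}$, which gives the bound~$(1-t^2)^{(d-1)/2}$ up to an absolute constant. Another is direct integration of the density~$\propto(1-s^2)^{(d-3)/2}$. Plugging in~$t=\sin\eps\geq 2\eps/\pi$ (we may assume~$\eps\leq\pi/2$, otherwise the event is empty when~$\eps$ exceeds the diameter scale) gives the bound~$C e^{-(d-1)\cdot 2\eps^2/\pi^2}$. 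Since~$2/\pi^2\approx0.203>1/8$ and~$d\geq 4$, we have~$(d-1)\cdot 2/\pi^2\geq d/8$, so this is at most~$C e^{-d\eps^2/8}$. The hypothesis~$d\geq4$ and the slack between~$2/\pi^2$ and~$1/8$ are there to absorb the constant. The two one-sided bounds then add up to~$6e^{-d\eps^2/8}$, provided the cap constant~$C$ from this step is at most~$3$.
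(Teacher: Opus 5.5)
Your argument is correct. The paper gives no proof of this statement; it quotes it from Corollary~19.31 of~\cite{DJT1995}. You quote only Lévy's isoperimetric inequality and derive the rest, which is the standard route to such corollaries. The following points all check out:
\begin{itemize}
\item the passage to the angular metric, since Euclidean distance never exceeds geodesic distance;
\item the median argument applied to $f$ and to $-f$;
\item the identification of the complement of $H_\varepsilon$ with $\{x_1>\sin\varepsilon\}$, and the case $\varepsilon>\pi/2$, where $H_\varepsilon$ is the whole sphere;
\item the numerics: $2/\pi^2>1/8$, and $(d-1)\cdot 2/\pi^2\geq d/8$ once $d\geq 3$.
\end{itemize}

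The one loose spot is the justification of the cap estimate in Step~3. The inequality you state, $\lambda_{d-1}(\{x_1>t\})\leq e^{-(d-1)t^2/2}$, is true. However, your sketch places the cap itself in a ball and then speaks of an unspecified absolute constant, so your final conclusion is left conditional on that constant being at most $3$.

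The clean version is Ball's cone argument. The normalized surface measure of $C\subset S^{d-1}$ equals the normalized volume of the cone $\{rx : 0\leq r\leq 1,\ x\in C\}$.
\begin{itemize}
\item For $t\leq 1/\sqrt2$, the cone over $\{x_1\geq t\}$ lies in the ball of radius $\sqrt{1-t^2}$ centred at $te_1$. Cap points satisfy $\|x-te_1\|^2=1-2tx_1+t^2\leq 1-t^2$, and the origin lies in this ball precisely when $t\leq 1/\sqrt2$. This gives the bound $(1-t^2)^{d/2}\leq e^{-dt^2/2}$.
\item For $t>1/\sqrt2$, use instead the ball of radius $1/(2t)$ centred at $e_1/(2t)$. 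This gives $(2t)^{-d}\leq e^{-dt^2/2}$, since $\ln(2t)\geq t^2/2$ on $[1/\sqrt2,1]$.
\end{itemize}

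So $\lambda_{d-1}(\{x_1>t\})\leq e^{-dt^2/2}$ with constant $1$ for all $t\in[0,1]$. Your two one-sided bounds then add up to $2e^{-d\varepsilon^2/8}$, better than the stated $6e^{-d\varepsilon^2/8}$, and the hypothesis $d\geq 4$ is not needed.
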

\begin{proof}[Proof of Theorem~\ref{Th1}]
We wish to prove a slightly stronger statement~$\ELL(S^{d-1}) \lesssim d^{-\frac12}(\log d)^{\frac12}$. We will be using the formula for~$\ELL$ given in Corollary~\ref{MeasureRemark}.  The measure~$\mu$ on~$S^{d-1}\times S^{d-1}$ is given implicitly by the formula
\eq{
\int\limits_{S^{d-1}\times S^{d-1}}\!\!\!\!\! g\,d\mu = \int\limits_{S^{d-1}} g(x,-x)\,d\lambda_{d-1}(x),
}
for any continuous function~$g\in C(S^{d-1}\times S^{d-1})$. Then,
\eq{
\int\limits_{S^{d-1}\times S^{d-1}}\!\!\!\!\! \rho(x,y)\,d\mu(x,y) = \int\limits_{S^{d-1}} \rho(x,-x)\,d\lambda_{d-1}(x) = 2,
}
whereas
\eq{
\int\limits_{S^{d-1}\times S^{d-1}}\!\!\!\!\! |f(x) - f(y)|\,d\mu(x,y) = \int\limits_{S^{d-1}} |f(x) - f(-x)|\,d\lambda_{d-1}(x) \leq 12 e^{-d\eps^2/8} + 2\eps
}
for any~$\eps$ by~\eqref{ConcentrationOfMeasure}. The choice~$\eps =  4d^{-\frac12} \sqrt{\log d}$ completes the proof.
\end{proof}

\section{Martingale method for bounding~$\LCJ$}\label{S4}
We start with a technical proposition that suggests a unified approach to Theorems~\ref{LaaksoTheorem} and~\ref{TreeTheorem}. 
\begin{Prop}\label{TechnicalMartingaleProposition}
Fix~$N\in \mathbb{\N}$ and assume a metric space~$\Metr$ fulfills two requirements.
\begin{enumerate}[1.]
\item There exists a finite filtration~$\FF = \{\FF_n\}_{n=1}^N$ such that for each~$n =1,2,\ldots, N$ the atoms of~$n$-th level have equal probability and for any~$\omega \in \AF_n$ there is a point~$x_\omega\in \Metr$ assigned to it. Furthermore\textup, the atoms of~$\AF_n$ are split into pairs\textup; if~$\omega_1$ and~$\omega_2$ are paired\textup, the constant~$c_{\omega_1,\omega_2}$ is defined by the formula
\eq{
P(\omega_1) = P(\omega_2) = c_{\omega_1,\omega_2} \rho(x_{\omega_1},x_{\omega_2}).
}
\item For any~$f\in \LIP_1(\Metr)$ there exists a martingale~$M$ adapted to~$\FF$ such that~$\|M\|_{L_\infty} \leq 1$ and for any~$n=1,2,\ldots, N-1$ and any pair of atoms~$\omega_1,\omega_2\in \AF_n$\textup, 
\eq{\label{eq412}
\E|d M_{n+1}|\chi_{\omega_1 \cup \omega_2} \gtrsim c_{\omega_1,\omega_2} |f(x_{\omega_1}) - f(x_{\omega_2})|.
}
\end{enumerate}
Then\textup,~$\ELL(\Metr)\lesssim N^{-\frac12}$.
\end{Prop}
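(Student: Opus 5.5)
We use the measure formulation of Corollary~\ref{MeasureRemark}. Build a finite measure $\mu$ on $\Metr\times\Metr$ out of the paired atoms: for each $n=1,\dots,N-1$ and each pair $\{\omega_1,\omega_2\}$ of atoms in $\AF_n$, put the point mass
\[
c_{\omega_1,\omega_2}\,\delta_{(x_{\omega_1},x_{\omega_2})},
\]
and let $\mu$ be the sum of these masses over all pairs and all levels.

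\textbf{Denominator.} For a single pair the contribution is
\[
c_{\omega_1,\omega_2}\,\rho(x_{\omega_1},x_{\omega_2}) = P(\omega_1)=\tfrac12 P(\omega_1\cup\omega_2).
\]
The pairs at level $n$ partition $\Omega$, so level $n$ contributes exactly $\tfrac12$. Hence
\[
\int\rho\,d\mu=\tfrac{N-1}{2}\asymp N .
\]
Requirement~1 implies that some $\rho(x_{\omega_1},x_{\omega_2})$ is nonzero, so $\mu$ is not concentrated on the diagonal and the denominator is finite and positive.

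\textbf{Numerator.} Fix $f\in\LIP_1(\Metr)$ and take the martingale $M$ provided by requirement~2. Apply \eqref{eq412} to every pair at level $n\le N-1$ and sum over the pairs of that level. Since these pairs partition $\Omega$, the sum over level $n$ is bounded by a constant times $\|dM_{n+1}\|_{L_1}$. Summing over $n$ gives
\[
\int|f(x)-f(y)|\,d\mu \lesssim \sum_{n=2}^{N}\|dM_n\|_{L_1}.
\]
Lemma~\ref{MartingaleOrthogonality} bounds the right-hand side by $\sqrt N\,\|M\|_{L_\infty}\le \sqrt N$.

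\textbf{Conclusion.} The ratio in \eqref{MeasureRemarkIdentity} is therefore at most a constant times $\sqrt N/N=N^{-1/2}$. The bound is uniform in $f$, so Corollary~\ref{MeasureRemark} gives $\ELL(\Metr)\lesssim N^{-1/2}$.

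The argument has no real obstacle. The points that need care are the bookkeeping of the level indices (the filtration starts at $n=1$, so the relevant differences are $dM_2,\dots,dM_N$) and the convention for $\sum_n\|dM_n\|_{L_1}$ in the lemma. The trivial case $N=1$ is covered by $\ELL\le 1$. The lemma applies to the finite martingale $M_1,\dots,M_N$ after re-indexing to start at zero.
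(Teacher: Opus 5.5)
Your proposal is correct and follows the paper's proof essentially step for step: the same measure $\mu=\sum_{n=1}^{N-1}\sum_{\text{pairs}}c_{\omega_1,\omega_2}\delta_{(x_{\omega_1},x_{\omega_2})}$, the same evaluation of the denominator as $(N-1)/2$, and the same bound of the numerator by $\sum_n\|dM_{n+1}\|_{L_1}\le\sqrt N$ via Lemma~\ref{MartingaleOrthogonality}. Your extra remarks (positivity of the denominator, the case $N=1$, and the re-indexing of the martingale) fill in details the paper leaves implicit.
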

\begin{proof}
We wish to use Corollary~\ref{MeasureRemark} and construct the measure
\eq{
\mu = \sum\limits_{n=1}^{N-1} \sum\limits_{\AF_n} c_{\omega_1,\omega_2} \delta_{(x_{\omega_1},x_{\omega_2})}; 
}
on~$\Metr\times \Metr$; in the interior summation we mean summation over all pairs of atoms in~$\AF_n$.  One the one hand,
\eq{
\int\limits_{\Metr\times \Metr} \rho(x,y)\,d\mu(x,y)= \sum\limits_{n=1}^{N-1}\sum\limits_{\AF_n} c_{\omega_1,\omega_2}\rho(x_{\omega_1},x_{\omega_2}) = \sum\limits_{n=1}^{N-1}\sum\limits_{\AF_n} P(\omega_1) = \frac{N-1}{2}.
}
On the other hand, for any~$f\in \LIP_1(\Metr)$, we construct the martingale~$M$ as described above and get
\mlt{
\int\limits_{\Metr\times \Metr} |f(x) - f(y)|\,d\mu(x,y) = \sum\limits_{n=1}^{N-1}\sum\limits_{\AF_n} c_{\omega_1,\omega_2}\big|f(x_{\omega_1}) - f(x_{\omega_2})\big|\\
\lesssim \sum\limits_{n=1}^{N-1}\sum\limits_{\AF_n} \E|d M_{n+1}|\chi_{\omega_1 \cup \omega_2} = \sum\limits_{n=1}^{N-1}\E |dM_{n+1}| \Leqref{eqq.key_martingal_ineq} \sqrt{N}.
}
Thus, Corollary~\ref{MeasureRemark} yields~$\ELL(\Metr)\lesssim N^{-\frac12}$.
\end{proof}
\begin{proof}[Proof of~\eqref{WholeTreeInequality}.]
Without loss of generality, we may assume the depth of the tree is a power of two, and, thus replace~$N$ with~$2^N$ in our notation. We wish to apply Proposition~\ref{TechnicalMartingaleProposition} and start with a convenient description of the filtration~$\FF$. Let the square~$[0,1]^2$ equipped with the standard Lebesgue measure be our probability space~$\Omega$. We enumerate the leaves of~$\TT$ with the numbers~$1,2,\ldots, 2^{2^N}$ and to each of them assign the rectangle
\eq{
P_j = \Big[\frac{j-1}{2^{2^N}},\frac{j}{2^{2^N}}\Big]\times [0,1]\subset \Omega
}
in a natural way. 

The~$\sigma$-field~$\FF_0$ is generated by the rectangles~$P_1, P_2,\ldots, P_{2^{2^N}}$. The~$\sigma$-field~$\FF_k$ is generated by the rectangles
\eq{
\Big[\frac{j-1}{2^{2^N}},\frac{j}{2^{2^N}}\Big]\times \Big[\frac{\ell-1}{2^k},\frac{\ell}{2^k}\Big],\qquad \ell = 1,2,\ldots, 2^k,\quad j=1,2,\ldots,2^{2^N}.
}
\begin{figure}[h!]
\centerline{
\includegraphics[height=5cm]{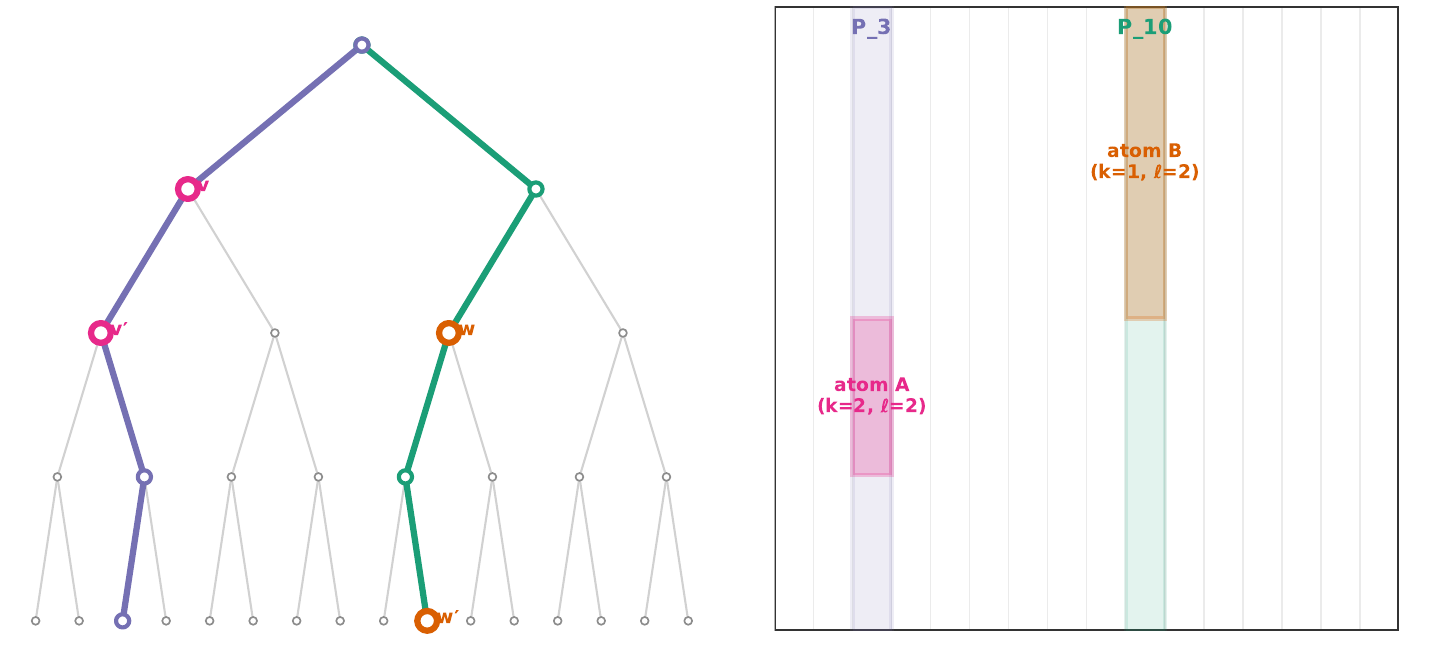}
}
\caption{Two vertices and corresponding atoms for~$N=2$.}
\label{Fig:TreeMartingale}
\end{figure}

Note that the probability of an atom depends on its level~$k$ only as required by Proposition~\ref{TechnicalMartingaleProposition}. We will identify atoms of our filtration with the triples~$(k,j,\ell)$ in a natural way. To each atom of~$\FF_k$, we assign two vertices in the tree: we consider the path from the root to the~$j$-th leaf and denote the~$(\ell-1)2^{N-k}$-th and~$\ell 2^{N-k}$-th vertices on this path, by~$v_{k,j,\ell}$ and~$v_{k,j,\ell}'$, respectively. Note that
\eq{
\rho(v_{k,j,\ell}, v_{k,j,\ell}') = 2^{N-k}.
}
See Fig.~\ref{Fig:TreeMartingale} for visualization of an example. As we shall see, the role of the vertice~$v_{k,j,\ell}$ is auxiliary, and~$v'_{k,j,\ell}$ will serve as~$x_{\omega}$ in the terminology of Proposition~\ref{TechnicalMartingaleProposition} for~$\omega \in \FF_k$ corresponding to~$(k,j,\ell)$. So far we have described the filtration~$\FF$ and the correspondence~$\omega \longleftrightarrow x_\omega$. 

Next, we describe how to construct the martingale~$M$ adapted to~$\FF$ from~$f\in \LIP_1(\TT)$. We construct~$M$ in the following way: On the atom~$(k,j,\ell)$ we set
\eq{
M_k(k,j,\ell) = 2^{k-N}\Big(f(v_{k,j,\ell}) - f(v_{k,j,\ell}')\Big).
}
One may see this is a martingale indeed, and the Lipschitz bound for~$f$ yields~$\|M\|_{\infty} \leq 1$; both assertions are completely analogous to Example~\ref{OneDimensionalExample}. In fact, the martingale constructed here is nothing but a concatenation of~$2^{2^N}$ martingales of the type described in Example~\ref{OneDimensionalExample} generated by the restrictions of~$f$ to paths from the root of~$\TT$ to a leaf.

Now we describe the martingale differences. Pick an atom~$(k,j,\ell)$ and let~$u_{k,j,l}$ be the midpoint of the segment~$[v_{k,j,\ell},v_{k,j,\ell}']$; we assume~$k < N$. Then, according to~\eqref{eq210},
\eq{
\Big|d M_{k+1}(k,j,\ell)\Big| = 2^{k-N} \Big |f(v_{k,j,\ell}) + f(v_{k,j,\ell}') - 2f(u_{k,j,\ell})\Big|.
}

It is high time to split our atoms into pairs. Consider all the atoms~$(k,j,l)$ for which~$u_{k,j,\ell} = U$, some fixed vertex in the tree. One may see that the choice of~$u_{k,j,\ell}$ fixes the parameters~$k$ and~$\ell$, while~$j$ may still vary; the vertex~$v_{k,j,\ell}$ is fixed and there are several choices possible for~$v_{k,j,\ell}'$. We note that the vertices considered are split into two groups of equal sizes: the ones for which the path from~$W$ downto~$v_{k,j,\ell}'$ turns left at~$U$ and the ones for which it turns right. We split the vertices into pairs with 'left' and 'right' vertices in each pair, in an arbitrary manner. Let~$v_1'$ and~$v_2'$ be two vertices in such a pair corresponding to the indices~$(k,j_1,\ell)$ and~$(k,j_2,\ell)$ and to atoms~$\omega_1$,~$\omega_2$ in~$\AF_n$. Then,
\mlt{\label{BoundindMartingaleDifference}
\E |d M_{k+1}| \chi_{\omega_1\cup \omega_2}\\
 = 2^{-2^N - k} \cdot 2^{k-N} \Big(\big|f(v) + f(v_1') - 2f(U)\big| + \big|f(v) + f(v_2') - 2f(U)\big|\Big)\\
  \geq 2^{-2^N - N}\big|f(v_1') - f(v_2')\big|.
}
Note that~$P((k,j,\ell)) = 2^{-2^N - k}$ and~$\rho(v_1',v_2') = 2^{k-N+1}$, which implies~\eqref{eq412} with~$c_{\omega_1,\omega_2} = 2^{-2^{N} - N}$. The application of Proposition~\ref{TechnicalMartingaleProposition} concludes the proof.
\end{proof}

\begin{proof}[Proof of Theorem~\ref{LaaksoTheorem}]
We will employ the ideas of the previous proof. Since the details are also quite similar, we omit some of them. Recall the notation introduced in Subsection~\ref{sLaakso}. It suffices to prove the bound
\eq{
\LCJ(X_N) \lesssim N^{-\frac12}.
}
To do this, we will employ Proposition~\ref{TechnicalMartingaleProposition}. 

We start with the description of the filtration~$\FF$. Consider all the paths from~$0$ to~$1$ in~$G_N$ that go from left to the right. Let their total number be~$M$. We split the square~$[0,1]^2$ into~$M$ rectangles of dimensions~$[0,1]\times M^{-1}$ in a natural way and match a path to such a rectangle. This defines the atoms of~$\FF_0$; denote the rectangle matching a path~$\gamma$ by~$R_\gamma$. For each~$k=1,2,\ldots, N$, we consider the restriction of~$\gamma$ to~$X_k$ and then extend it to~$G_k$, where it becomes a path of~$4^k$ segments of length~$4^{-k}$. This naturally defines the splitting of~$R_\gamma$ into the rectangles of the type
\eq{
\Big[\frac{j-1}{4^k}, \frac{j}{4^k}\Big]\times \Big[\frac{s-1}{M}, \frac{s}{M}\Big],\qquad j=1,2,\ldots, 4^k, \quad R_\gamma = [0,1]\times \Big[\frac{s-1}{M}, \frac{s}{M}\Big].
}
Such type rectangles define the atoms of~$\FF_k$. Note that atoms of~$\FF_k$ have the same probabilities.

We have constructed the filtration and now turn to construction of martingales from Lipschitz functions. For each path~$\gamma$ of the type described above and~$f\in \LIP_1(X_N)$, we consider the function~$f|_{\gamma}$ as a function on an interval (extend it to piecewise affine function). This interval is equipped with the natural four-adic filtration inherited from~$\FF$. The filtration, in its turn, generates a bounded martingale similar to the one in Example~\ref{OneDimensionalExample}.

We need to assign a vertex in~$X_N$ to each atom in our filtration. By the construction, to each atom~$\omega$ of~$k$-th level there corresponds a path~$\gamma$ in~$G_N$ and an edge~$\ell \in G_k$ connecting two neighbor vertices in~$G_k$, both lying on~$\gamma$. In the graph~$G_{k+1}$,~$\ell$ is replaced with four edges whose endpoints lie on~$\gamma$, and we choose the common endpoint of the two middle edges for the role of~$x_\omega$ in the terminology of Proposition~\ref{TechnicalMartingaleProposition}.

Finally, we need to describe the pairing of atoms in~$\FF_k$. We fix an edge~$\ell \in G_k$ and consider all the atoms of the filtration that correspond to it (whose paths pass through both endpoints of~$\ell$). They naturally split into two groups according to the chosen point (the ones that choose the `upper' midpoint and the ones that choose the `lower' one), and these groups have equal cardinalities. Thus, we may pair the `upper' atoms with the `lower' ones in an arbitrary manner.
\begin{figure}[h!]
\centerline{
\includegraphics[height=3cm]{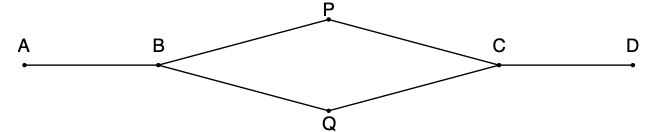}
}
\caption{Illustration to the proof of the analog of~\eqref{BoundindMartingaleDifference}}
\label{Fig:SixPoints}
\end{figure}

It remains to verify~\eqref{eq412}. The computation is similar to~\eqref{BoundindMartingaleDifference}.
Consider two paths that go through~$\ell = [A,D]$ in~$G_k$ and such that they pass through different points~$P, Q$ in~$G_{k+1}$ (see Fig.~\ref{Fig:SixPoints} for visualization). We have~$\rho(P,Q) = 2\cdot 4^{-k-1}$ and~$P(\omega_1) = P(\omega_2) = 4^{-k}M^{-1}$. Therefore,~$c_{\omega_1,\omega_2} = 2 M^{-1}$. Then, the corresponding part of the mathematical expectation is
\mlt{
M^{-1}4^{-k-1}\cdot 4^k \Big(2|f(D) + 3 f(A) - 4 f(B)| + 2|f(A) + 3 f(D) - 4 f(C)| + |f(D) - f(A) + 4 f(B) - 4 f(P)| \\
+ |f(D) - f(A) - 4 f(C) + 4 f(P)| + |f(D) - f(A) + 4 f(B) - 4 f(Q)| + |f(D) - f(A) - 4 f(C) + 4 f(Q)| \Big) \\
  \geq \frac{1}{4M}|f(P) - f(Q)|,
}
which yields~\eqref{eq412} for our graph. The application of Proposition~\ref{TechnicalMartingaleProposition} concludes the proof.
\end{proof}

\section{Proof of Theorem~\ref{UltrametricTheorem}}\label{S5}
Without loss of generality, we assume~$\Metr$ is separable and has diameter~$1$. Note that bi-Lipschitz change of metric does not affect the property of belonging to the~$\LCJs$-class. Thus, by Proposition~\ref{Prop.ultrametric}, we may assume that $\rho$ is an ultrametric on $\Metr$.

We will construct a random~$1$-Lipschitz function~$\Psi$ on~$\Metr$ such that
\eq{\label{Expectation}
\E \big|\Psi(x) - \Psi(y)\big| \gtrsim \rho(x,y),\qquad \text{for any\ } x,y\in \Metr.
}
Clearly, this proves the theorem. To be more precise, we will construct a random~$C$-Lipschitz function~$\Phi$,~$C\in \R$, such that it also satisfies~\eqref{Expectation}; then, we may set~$\Psi = \Phi/C$.

For any given~$r$, the ultrametric space~$\Metr$ may be split into the balls~$B_{r}(x_j)$ in such a way that
\eq{
\rho(B_r(x_j), B_r(x_i)) \geq r, \qquad i\ne j,
}
see Section 15.3 in \cite{DavidSemmes1997} for details. Pick~$q\in (0,1)$ to be sufficiently small number. Fix the splittings described above for~$r = q^n$,~$n\in \N$, and denote the centers of the balls in the~$n$-th splitting by~$x_{n,j}$. Let~$\{\eps_{n,j}\}_{n,j}$ be a family of the independent Bernoulli random variables. We are ready to define the desired function~$\Phi$:
\eq{
\Phi(x) = \sum\limits_{n=1}^\infty q^{n}\sum\limits_j \eps_{n,j}\chi_{B_{q^{-n}}(x_{n,j})}(x),\qquad x\in \Metr.
}
We need to prove two claims:~$\Phi$ is almost surely~$C$-Lipschitz for sufficiently large~$C$, and~\eqref{Expectation}. Let us start with the first claim. Fix~$x$ and~$y$ in~$\Metr$ and let~$n_0$ be the smallest integer~$n$ such that~$x$ and~$y$ belong to different balls of the~$n$-th partition. In particular,~$\rho(x,y) \asymp q^{n_0}$. Then, by the construction,
\eq{
|\Phi(x) - \Phi(y)|  = \Big|\sum\limits_{k \geq n_0} q^{n}(\xi_k -\eta_k)\Big| \leq \frac{2q^{n_0}}{1-q} \lesssim \rho(x,y);
}
where~$\xi_k$ and~$\eta_k$ are some of the~$\eps_{n,j}$. Here we have used the property that the family~$\{B_{q^{-n}}(x_{n,j})\}_j$ is disjoint. Thus, the constructed function is~$C$-Lipschitz indeed, where~$C = C(q)$.

Let us prove~\eqref{Expectation}. By the same formula,
\eq{
\E \big|\Phi(x) - \Phi(y)\big|  \geq q^{n_0}\E |\xi_{j_0} - \eta_{j_0}| - \sum\limits_{k > n_0} q^{n}\E\big|\xi_k -\eta_k\big| \geq \frac12 q^{n_0}- \frac{q^{n_0+1}}{1-q}.
}
This is bounded away from zero by~$\frac14  q^{n_0}\asymp \rho(x,y)$, provided~$q$ is sufficiently small. We fix such~$q$ and obtain~\eqref{Expectation} and the uniform Lipschitz bound for~$\Phi$.

\bibliography{/Users/mac/Documents/Bib/Mybib}{}
\bibliographystyle{plain}

\end{document}